\documentclass[12pt,a4paper]{amsart}
\usepackage{amssymb}
\usepackage{hyperref}

\pagestyle{plain}
\raggedbottom

\textwidth=36pc
\calclayout

\emergencystretch=2em

\newcommand{\+}{\nobreakdash-}

\newcommand{\rarrow}{\longrightarrow}

\newcommand{\ot}{\otimes}

\newcommand{\bu}{{\text{\smaller\smaller$\scriptstyle\bullet$}}}

\newcommand{\lrarrow}{\mskip.5\thinmuskip\relbar\joinrel\relbar\joinrel
 \rightarrow\mskip.5\thinmuskip\relax}

\DeclareMathOperator{\Hom}{Hom}
\DeclareMathOperator{\Ext}{Ext}

\newcommand{\Modl}{{\operatorname{\mathsf{--Mod}}}}

\newcommand{\sC}{\mathsf C}
\newcommand{\sF}{\mathsf F}
\newcommand{\sS}{\mathsf S}

\newcommand{\proj}{\mathsf{proj}}
\renewcommand{\flat}{\mathsf{flat}}
\renewcommand{\cot}{\mathsf{cot}}

\newcommand{\dcot}{{\operatorname{\mathsf{-cot}}}}
\newcommand{\dflpr}{{\operatorname{\mathsf{-flpr}}}}

\newcommand{\rop}{\mathrm{op}}

\newcommand{\Fil}{\mathsf{Fil}}

\newcommand{\boZ}{\mathbb Z}
\newcommand{\boQ}{\mathbb Q}

\newcommand{\Section}[1]{\bigskip\section{#1}\medskip}
\setcounter{tocdepth}{1}

\theoremstyle{plain}
\newtheorem{thm}{Theorem}[section]
\newtheorem{prop}[thm]{Proposition}
\newtheorem{lem}[thm]{Lemma}
\newtheorem{cor}[thm]{Corollary}
\theoremstyle{definition}
\newtheorem{rem}[thm]{Remark}
\newtheorem{rems}[thm]{Remarks}
\newtheorem{quest}[thm]{Question}
\newtheorem{ex}[thm]{Example}

\begin{document}

\title{A relative version of Bass' theorem \\
about finite-dimensional algebras}

\author{Leonid Positselski}

\address{Institute of Mathematics, Czech Academy of Sciences \\
\v Zitn\'a~25, 115~67 Prague~1 \\ Czech Republic} 

\email{positselski@math.cas.cz}

\begin{abstract}
 As a special case of Bass' theory of perfect rings, one obtains
the assertion that, over a finite-dimensional associative algebra over
a field, all flat modules are projective.
 In this paper we prove the following relative version of this result.
 Let $R\rarrow A$ be a homomorphism of associative rings such that $A$
is a finitely generated projective right $R$\+module.
 Then every flat left $A$\+module is a direct summand of an $A$\+module
filtered by $A$\+modules $A\ot_RF$ induced from flat left
$R$\+modules~$F$.
 In other words, a left $A$\+module is cotorsion if and only if its
underlying left $R$\+module is cotorsion.
 The proof is based on the cotorsion periodicity theorem.
\end{abstract}

\maketitle

\tableofcontents

\section*{Introduction}
\medskip

 All finite-dimensional associative algebras $A$ over a field~$k$ are
left and right Artinian rings.
 Any associative ring that is either left or right Artinian is left
perfect.
 According to~\cite[Theorem~P]{Bas}, it follows that all flat
$A$\+modules are projective.

 This proof of projectivity of flat modules is based on the structure
theory of finite-dimensional algebras over a field.
 Any such algebra $A$ has a radical $J$, which is a nilpotent ideal
in $A$, and the quotient algebra $A/J$ is classically semisimple.

 According to the original Bass' definition in~\cite{Bas},
an associative ring $A$ is said to be \emph{left perfect} if all left
$A$\+modules have projective covers.
 By~\cite[Theorem~P(3)]{Bas}, the ring $A$ is left perfect if and only
if all flat left $A$\+modules are projective.
 The condition of~\cite[Theorem~P(1)]{Bas} provides a characterization
of left perfect rings in terms of their structural properties that is
used in the argument above.

 Now let $R$ be a commutative ring and $A$ be an associative
$R$\+algebra that is finitely generated and projective as
an $R$\+module.
 What can one say about flat $A$\+modules in this relative context?
 Notice that the structure theory of finite-dimensional algebras over
a field is not available for finite algebras over commutative rings.

 Our approach to the question in the previous paragraph is based on
the notion of a \emph{cotorsion pair}, which was introduced by
Salce~\cite{Sal} in the context of the theory of infinitely generated
abelian groups.
 An introductory discussion of cotorsion pairs in abelian categories
and module categories can be found in the introduction to
the paper~\cite{Pctrl}.
 The book~\cite{GT} contains a wealth of results and examples.

 Let $A$ be an associative ring.
 A \emph{cotorsion pair} $(\sF,\sC)$ in the category of left
$A$\+modules $A\Modl$ is a pair of classes of modules $\sF$,
$\sC\subset A\Modl$ such that $\Ext^1_A(F,C)=0$ for all $F\in\sF$,
\ $C\in\sC$, and both the classes $\sF$ and $\sC$ are maximal for
this property with respect to each other.

 The thematic example of a cotorsion pair is the \emph{flat cotorsion
pair} (also known as the \emph{Enochs cotorsion pair}~\cite{En2},
\cite[Definition~5.18(i)]{GT}).
 In this example, the class $\sF=A\Modl_\flat$ is the class of all
flat left $A$\+modules.
 The class $\sC=A\Modl^\cot$ is the class of all \emph{cotorsion}
left $A$\+modules.
 Here a left $A$\+module $C$ is said to be \emph{cotorsion} if
$\Ext^1_A(F,C)=0$ for all flat left $A$\+modules $F$, or equivalently,
$\Ext^n_A(F,C)=0$ for all flat left $A$\+modules $F$ and all $n\ge1$.

 The main result about cotorsion pairs is the \emph{Eklof--Trlifaj
theorem}~\cite[Theorems~2 and~10]{ET}, \cite[Theorem~6.11 and
Corollary~6.14]{GT}.
 The Eklof--Trlifaj theorem concerns the notion of a \emph{complete}
cotorsion pair, and its simplest formulation claims that \emph{every
cotorsion pair generated by a set of modules is complete}.
 Proving completeness of the flat cotorsion pair in the category of
modules over an arbitrary ring was a major open problem, which
became known under the name of the \emph{Flat Cover Conjecture}.
 Posed by Enochs in~\cite{En} in~1981 (see also~\cite[Section~1.3]{Xu}),
it was proved by Bican--El~Bashir--Enochs in~\cite{BBE} in~2001.
 One of the two proofs of the Flat Cover Conjecture presented
in~\cite{BBE} was based on the Eklof--Trlifaj theorem.
 Subsequently it was understood~\cite[Corollary~6.6]{Hov},
\cite[Theorem~4.5]{Ros} that a proof of the Eklof--Trlifaj theorem
can be also obtained as an application of Quillen's small object
argument~\cite[Lemma~II.3.3]{Quil}.

 A trivial example of a cotorsion pair in $A\Modl$ is
the \emph{projective cotorsion pair}.
 It is formed by the two classes of projective $A$\+modules
$\sF=A\Modl_\proj$ and arbitrary $A$\+modules $\sC=R\Modl$.
 The assertion that the projective cotorsion pair is complete means
precisely that there are enough projective objects in $A\Modl$.

 Another cotorsion pair relevant for the purposes of the present paper
is the \emph{flaprojective cotorsion pair}.
 It depends on a homomorphism of associative rings $R\rarrow A$.
 The flaprojective cotorsion pair in $A\Modl$ is a relative mixture of
the flat cotorsion pair in $R\Modl$ and the projective cotorsion pair
in the direction of $A$ relative to~$R$.

 Specifically, the flaprojective cotorsion pair $(\sF,\sC)$ is formed
by the following two classes of left $A$\+modules.
 The right-hand class $\sC$ consists of all left $A$\+modules that are
\emph{cotorsion as $R$\+modules}, i.~e., the $A$\+modules whose
underlying $R$\+module is cotorsion.
 The left-hand class $\sF$ consists of all the left $A$\+modules $F$
such that $\Ext^1_A(F,C)=0$ for all left $A$\+modules $C$ that are
cotorsion over~$R$.
 The $A$\+modules $F$ satisfying the latter condition are called
\emph{flaprojective relative to~$R$} (or \emph{$A/R$\+flaprojective})
\cite[Section~1.6]{Pdomc}.

 The definition of a flaprojective module in the previous paragraph is
very indirect and roundabout.
 First one defines cotorsion $R$\+modules as the $R$\+modules right
$\Ext^1_R$\+orthogonal to flat $R$\+modules, and then one defines
flaprojective $A$\+modules as the $A$\+modules left
$\Ext^1_A$\+orthogonal to $R$\+cotorsion $A$\+modules.
 A much more direct and explicit description is provided by
the Eklof--Trlifaj theorem, but it uses the concept of
an \emph{ordinal-indexed filtration} on modules.
 A left $A$\+module $F$ is $A/R$\+flaprojective if and only if $F$
is a direct summand of an $A$\+module admitting an ordinal-indexed
filtration whose successive quotients are the $A$\+modules
$A\ot_RG$ induced from flat left $R$\+modules~$G$.

 Now that we have explained the meaning of the concepts involved,
we can formulate the main result of this paper.

\begin{thm} \label{main-theorem}
 Let $R\rarrow A$ be a homomorphism of associative rings making $A$
a finitely generated projective right $R$\+module.
 Then the flat cotorsion pair in $A\Modl$ consides with
the $A/R$\+flaprojective cotorsion pair.
 More specifically: \par
\textup{(a)} A left $A$\+module $C$ is cotorsion if and only if $C$ is
cotorsion as a left $R$\+module. \par
\textup{(b)} A left $A$\+module is flat if and only if it is
$A/R$\+flaprojective.
 In other words, a left $A$\+module $F$ is flat if and only if $F$ is
a direct summand of an $A$\+module filtered by $A$\+modules $A\ot_RG$
induced from flat left $R$\+modules~$G$.
\end{thm}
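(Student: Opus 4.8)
\emph{Plan of the proof.}
I would reduce the theorem, through the formalism of cotorsion pairs, to a single statement about $R$\+cotorsion modules, and then prove that statement using the relative bar resolution together with the cotorsion periodicity theorem.

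\emph{Step 1 (setup and reduction).} First I would note that the $A/R$\+flaprojective cotorsion pair $(\sF',\sC')$ in $A\Modl$ is complete: the flat cotorsion pair in $R\Modl$ is generated by a set $\sS$ of flat $R$\+modules, and then $(\sF',\sC')$ is generated by the set $\{A\ot_RG : G\in\sS\}$, so the Eklof--Trlifaj theorem applies. Since $M\ot_A(A\ot_RG)\cong M\ot_RG$, every module $A\ot_RG$ with $G$ flat over $R$ is flat over $A$; as the class of flat $A$\+modules is closed under transfinite extensions and direct summands, this gives $\sF'\subseteq\sF$, and hence, passing to right $\Ext^1_A$\+orthogonals, $\sC\subseteq\sC'$ --- the ``only if'' direction of~(a). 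Granting this, the assertions $\sF=\sF'$, $\sC=\sC'$, $\sC'\subseteq\sC$, and~(b) become mutually equivalent, the filtration clause of~(b) being exactly the Eklof--Trlifaj description of~$\sF'$. So it remains to prove: \emph{a left $A$\+module cotorsion over $R$ is cotorsion over~$A$.}

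\emph{Step 2 (the key vanishing --- the hard part).} The decisive point, and the place where the hypothesis is used, is the following. Let $M$ be an $A$\+$R$\+bimodule that is finitely generated projective as a right $R$\+module, and set $M^\vee=\Hom_{R^\rop}(M,R)$; this is an $R$\+$A$\+bimodule, finitely generated projective as a \emph{left} $R$\+module, and $M\ot_RC\cong\Hom_R(M^\vee,C)$ naturally in the left $A$\+module~$C$. For $F$ flat over $A$, choosing a projective resolution $Q_\bullet\rarrow F$ in $A\Modl$ and using the tensor--hom adjunction,
\[
 \Ext^n_A(F,\,M\ot_RC)\;\cong\;H^n\Hom_R\bigl(M^\vee\ot_AQ_\bullet,\;C\bigr)\;\cong\;\Ext^n_R\bigl(M^\vee\ot_AF,\;C\bigr),
\]
since $M^\vee\ot_AQ_\bullet$ is a complex of projective $R$\+modules resolving $M^\vee\ot_AF$ (its higher homology is $\Tor^A_{>0}(M^\vee,F)=0$, as $F$ is flat). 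Moreover $M^\vee\ot_AF$ is flat as a left $R$\+module: $N\ot_R(M^\vee\ot_AF)\cong(N\ot_RM^\vee)\ot_AF$, and both $N\mapsto N\ot_RM^\vee$ (because $M^\vee$ is flat over $R$) and $(-)\ot_AF$ (because $F$ is flat over $A$) are exact. Therefore, if $C$ is cotorsion over $R$, then $\Ext^1_A(F,M\ot_RC)\cong\Ext^1_R(M^\vee\ot_AF,C)=0$ for all flat $F$; that is, $M\ot_RC$ is cotorsion over~$A$.

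\emph{Step 3 (periodicity) and main obstacle.} Let $C$ be a left $A$\+module cotorsion over $R$. An easy induction shows that every tensor power $A^{\ot_R n}$ is finitely generated projective as a right $R$\+module, so by Step~2 all terms of the relative bar resolution
\[
 \cdots\rarrow A\ot_RA\ot_RC\rarrow A\ot_RC\rarrow C\rarrow 0
\]
are cotorsion over $A$, and this complex is exact, being contractible over $R$ via $x\mapsto 1\ot x$. On the other side, since the flat cotorsion pair in $A\Modl$ is complete (Bican--El~Bashir--Enochs), iterating cotorsion preenvelopes produces an exact complex $0\rarrow C\rarrow E^0\rarrow E^1\rarrow\cdots$ with every $E^i$ cotorsion over $A$. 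Splicing the two complexes at $C$ yields a two-sided acyclic complex of cotorsion $A$\+modules having $C$ among its cycles; the cotorsion periodicity theorem then forces $C$ to be cotorsion over~$A$, finishing Step~1 and the proof. I expect Step~2 to be the real obstacle: recognizing that induction along a bimodule finitely generated projective on the right carries $R$\+cotorsion modules to $A$\+cotorsion ones, via the transfer of flatness $F\rightsquigarrow M^\vee\ot_AF$ from $A$ to $R$. Once this is available, the bar resolution supplies the requisite two-sided complex of cotorsion modules at no cost, and periodicity closes the loop; everything else is routine cotorsion-pair bookkeeping.
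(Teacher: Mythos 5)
Your proposal is correct and follows essentially the same route as the paper: reduce via the cotorsion-pair formalism to showing that $R$\+cotorsion implies $A$\+cotorsion, observe that induction along a bimodule finitely generated projective on the right preserves cotorsionness (via the duality $M\ot_RC\simeq\Hom_R(M^\vee,C)$), and then splice the relative bar resolution with a cotorsion coresolution and invoke cotorsion periodicity. The only cosmetic difference is that you prove the key vanishing for a general bimodule and apply it to $A^{\ot_R n}$ directly, whereas the paper iterates the case $M=A$ alternately with restriction of scalars; both are valid.
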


 It would be interesting to know how far one can generalize
Theorem~\ref{main-theorem}.
 Quite generally, for any homomorphism of associative rings
$R\rarrow A$, property~(a) in the theorem holds if and only if
property~(b) holds.
 If this is the case, one can say that the ring homomorphism
$R\rarrow A$ is \emph{relatively left perfect}.

 In the absolute setting, a ring $A$ is called \emph{left perfect}
if all flat left $A$\+modules are projective~\cite[Theorem~P]{Bas}.
 For example, if $A$ is an associative algebra over a field~$k$,
then the ring $A$ is left perfect in the sense of~\cite{Bas} if and
only if the ring homomorphism $k\rarrow A$ is relatively left
perfect in the sense of the previous paragraph.

 There are many examples of left perfect rings and left perfect
$k$\+algebras beyond the finite-dimensional $k$\+algebras.
 A class of examples of relatively left perfect ring homomorphisms
is provided Theorem~\ref{main-theorem}.
 Another such class, consisting of all the left flat ring epimorphisms
$R\rarrow A$, is discussed in
Example~\ref{left-flat-ring-epimorphisms-example} at the end of
this paper.
 What are other nontrivial classes of examples of relatively left
perfect ring homomorphisms?
 Can one characterize relatively left perfect ring homomorphisms by any
relative analogues of the properties listed in~\cite[Theorem~P]{Bas}\,?

\subsection*{Acknowledgement}
 I~am grateful to Michal Hrbek and Sergio Pavon for helpful
conversations.
 A special gratitute goes to Silvana Bazzoni for communicating
Example~\ref{left-flat-ring-epimorphisms-example} to me and giving
a kind permission to reproduce it in this paper.
 The author is supported by the GA\v CR project 23-05148S
and the Institute of Mathematics, Czech Academy of Sciences
(research plan RVO:~67985840).

\Section{Preliminaries on Cotorsion Pairs}

 Let $A$ be an associative ring.
 We denote by $A\Modl$ the abelian category of left $A$\+modules.

 Let $\sF$, $\sC\subset A\Modl$ be two classes of $A$\+modules.
 Then one denotes by $\sF^{\perp_1}\subset A\Modl$ the class of all
left $A$\+modules $X$ such that $\Ext^1_A(F,X)=0$ for all $F\in\sF$.
 Dually, the notation ${}^{\perp_1}\sC\subset A\Modl$ stands for
the class of all left $A$\+modules $Y$ such that $\Ext^1_A(Y,C)=0$
for all $C\in\sC$.

 A pair of classes $(\sF,\sC)$ in $A\Modl$ is said to be
a \emph{cotorsion pair} if $\sC=\sF^{\perp_1}$ and
$\sF={}^{\perp_1}\sC$.
 For any class of modules $\sS\subset A\Modl$, the pair of classes
$\sC=\sS^{\perp_1}$ and $\sF={}^{\perp_1}\sC$ is a cotorsion pair
in $A\Modl$.
 The latter cotorsion pair $(\sF,\sC)$ is said to be \emph{generated
by} the class of modules $\sS\subset A\Modl$.

 A cotorsion pair $(\sF,\sC)$ in $A\Modl$ is said to be \emph{complete}
if for every left $A$\+module $M$ there exist short exact sequences
of $A$\+modules
\begin{alignat}{4}
 0 &\lrarrow C' &&\lrarrow F &&\lrarrow M &&\lrarrow 0,
 \label{special-precover-sequence} \\
 0 &\lrarrow M &&\lrarrow C &&\lrarrow F' &&\lrarrow 0
 \label{special-preenvelope-sequence}
\end{alignat}
with $A$\+modules $F$, $F'\in\sF$ and $C$, $C'\in\sC$.
 The short exact sequence~\eqref{special-precover-sequence} is called
a \emph{special precover sequence}.
 The short exact sequence~\eqref{special-preenvelope-sequence} is
called a \emph{special preenvelope sequence}.
 Taken together, the short exact
sequences~(\ref{special-precover-sequence}\+-%
\ref{special-preenvelope-sequence}) are called
the \emph{approximation sequences}.

 An introductory discussion of cotorsion pairs can be found in
the introduction to~\cite{Pctrl}.
 Many examples of approximation sequences (for abelian groups,
i.~e., modules over the ring of integers $A=\boZ$) are discussed
in~\cite[Section~12]{Pcta}.
 The main reference source on cotorsion pairs in module categories
is the book~\cite{GT}.

 Let $F$ be an $A$\+module and $\alpha$~be an ordinal.
 An \emph{$\alpha$\+indexed filtration} on $F$ is a family of
submodules $F_i\subset F$, indexed by the ordinals $0\le i\le\alpha$
and satisfying the following conditions:
\begin{itemize}
\item $F_0=0$ and $F_\alpha=F$;
\item $F_i\subset F_j$ for all ordinals $0\le i\le j\le\alpha$;
\item $F_j=\bigcup_{i<j}F_i$ for all limit ordinals $0<j\le\alpha$.
\end{itemize}
 Given an $A$\+module $F$ with an ordinal-indexed filtration
$(F_i\subset F)_{0\le i\le\alpha}$, one considers the successive
quotient $A$\+modules $S_i=F_{i+1}/F_i$, where $0\le i<\alpha$.
 The $A$\+module $F$ is said to be \emph{filtered by}
the $A$\+modules~$S_i$.
 In an alternative terminology, the $A$\+module $F$ is said to be
a \emph{transfinitely iterated extension} (\emph{in the sense of
the direct limit}) of the $A$\+modules $S_i=F_{i+1}/F_i$.

 For any class of $A$\+modules $\sS\subset A\Modl$, one denotes
by $\Fil(\sS)$ the class of all $A$\+modules filtered by (modules
isomorphic to) modules from~$\sS$.
 A class of $A$\+modules $\sF\subset A\Modl$ is said to be
\emph{deconstructible} if there exists a \emph{set} (rather than
a proper class) of modules $\sS\subset A\Modl$ such that
$\sF=\Fil(\sS)$.

 The following result is known as the \emph{Eklof lemma}.

\begin{lem} \label{eklof-lemma}
 For any class of $A$\+modules\/ $\sS\subset A\Modl$, one has\/
$\sS^{\perp_1}=\Fil(\sS)^{\perp_1}$.
 In other words, for any class of $A$\+modules\/ $\sC\subset A\Modl$,
the class of $A$\+modules\/ ${}^{\perp_1}\sC\subset A\Modl$ is
closed under transfinitely iterated extensions, that is,
${}^{\perp_1}\sC=\Fil({}^{\perp_1}\sC)$.
\end{lem}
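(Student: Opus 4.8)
Since every $S\in\sS$ carries the trivial filtration $0=F_0\subseteq F_1=S$ with $F_1/F_0=S$, we have $\sS\subseteq\Fil(\sS)$, and hence $\Fil(\sS)^{\perp_1}\subseteq\sS^{\perp_1}$ immediately. The content of the lemma is the reverse inclusion, and the plan is to prove it straight from the definition of $\Ext^1$. So I would fix a module $C\in\sS^{\perp_1}$ and a module $F\in\Fil(\sS)$ together with an $\alpha$\+indexed filtration $(F_i\subseteq F)_{0\le i\le\alpha}$ all of whose successive quotients $S_i=F_{i+1}/F_i$ lie in $\sS$, take an arbitrary short exact sequence $0\rarrow C\rarrow E\rarrow F\rarrow0$ with projection $\pi\:E\rarrow F$, and show that it splits. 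Since $C$ and $F$ are arbitrary, this gives $\sS^{\perp_1}\subseteq\Fil(\sS)^{\perp_1}$.

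To produce a splitting I would build, by transfinite recursion, a compatible family of $A$\+module homomorphisms $s_i\:F_i\rarrow E$ with $\pi s_i$ equal to the inclusion $F_i\hookrightarrow F$ and $s_j|_{F_i}=s_i$ whenever $i\le j$. One sets $s_0=0$; at a limit ordinal $j$ one puts $s_j=\bigcup_{i<j}s_i$, which is well defined because $F_j=\bigcup_{i<j}F_i$ and the previously constructed maps are pairwise compatible. The only step carrying any mathematical content is the passage from $s_i$ to $s_{i+1}$, and this is exactly where the hypothesis on $C$ is used: since $S_i\in\sS$ and $C\in\sS^{\perp_1}$, one has $\Ext^1_A(S_i,C)=0$.

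For the successor step I would pass to $D=\pi^{-1}(F_{i+1})\subseteq E$ and $D_i=\pi^{-1}(F_i)\subseteq D$, so that $0\rarrow C\rarrow D\rarrow F_{i+1}\rarrow0$ and $0\rarrow C\rarrow D_i\rarrow F_i\rarrow0$ are exact, the latter split by $s_i$ (whose image lies in $D_i$); thus $D_i=C\oplus s_i(F_i)$ and $D/D_i\cong F_{i+1}/F_i=S_i$. Factoring out $s_i(F_i)$ yields a short exact sequence $0\rarrow C\rarrow D/s_i(F_i)\rarrow S_i\rarrow0$, which splits because $\Ext^1_A(S_i,C)=0$. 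A splitting gives a retraction onto $C$, and composing it with the projection $D\rarrow D/s_i(F_i)$ produces an $A$\+linear map $r\:D\rarrow C$ with $r|_C=\mathrm{id}_C$ and $r|_{s_i(F_i)}=0$. Setting $s_{i+1}(x)=\tilde x-r(\tilde x)\in D\subseteq E$ for any preimage $\tilde x\in D$ of a given $x\in F_{i+1}$ then gives a well-defined $A$\+linear section of $\pi$ over $F_{i+1}$ (independence of the choice of $\tilde x$ uses $r|_C=\mathrm{id}_C$) which extends $s_i$ (this uses $r|_{s_i(F_i)}=0$). After $\alpha$ steps, $s_\alpha\:F\rarrow E$ splits the given sequence.

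Finally, for the ``in other words'' reformulation I would apply the identity just proved to the class $\sS={}^{\perp_1}\sC$: from the general inclusion $\sC\subseteq({}^{\perp_1}\sC)^{\perp_1}=\Fil({}^{\perp_1}\sC)^{\perp_1}$ it follows that every module in $\Fil({}^{\perp_1}\sC)$ is left $\Ext^1_A$\+orthogonal to $\sC$, so $\Fil({}^{\perp_1}\sC)\subseteq{}^{\perp_1}\sC$, and the opposite inclusion is the trivial one from the first paragraph. I expect the successor step to be the main obstacle --- more precisely, arranging matters so that the vanishing $\Ext^1_A(S_i,C)=0$ can actually be brought to bear and so that the resulting section genuinely restricts to the previous one; the base case, the limit case, and the initial reduction to splitting a single extension are purely formal.
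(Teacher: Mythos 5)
Your proof is correct: the transfinite recursion building a compatible family of sections $s_i\:F_i\rarrow E$, with the successor step handled by splitting $0\rarrow C\rarrow D/s_i(F_i)\rarrow S_i\rarrow 0$ via $\Ext^1_A(S_i,C)=0$, is exactly the standard argument for the Eklof lemma. The paper itself gives no proof but simply cites \cite[Lemma~1]{ET} and \cite[Lemma~6.2]{GT}, and your argument is essentially the one found in those references, so there is nothing to add.
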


\begin{proof}
 This is~\cite[Lemma~1]{ET} or~\cite[Lemma~6.2]{GT}.
\end{proof}

 Given a class of $A$\+modules $\sF\subset A\Modl$, we denote by
$\sF^\oplus\subset A\Modl$ the class of all direct summands of
modules from~$\sF$.

\begin{thm}[Eklof--Trlifaj~\cite{ET}] \label{eklof-trlifaj-theorem}
 Let $A$ be an associative ring and\/ $\sS\subset A\Modl$ be
a \emph{set} of left $A$\+modules.
 In this context: \par
\textup{(a)} The cotorsion pair $(\sF,\sC)$ generated by\/ $\sS$ in
$A\Modl$ is complete. \par
\textup{(b)} Assume that the free left $A$\+module $A$ with one
generator belongs to\/~$\sS$, that is ${}_AA\in\sS$.
 Then one has\/ $\sF=\Fil(\sS)^\oplus$.
\end{thm}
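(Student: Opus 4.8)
The plan is to establish completeness, part~(a), by the standard transfinite construction that kills $\Ext$\+classes, and then to deduce part~(b) from that construction. First, since $\Ext^1_A({}_AA,-)=0$, adjoining ${}_AA$ to $\sS$ changes neither $\sS^{\perp_1}$ nor the cotorsion pair it generates, so in both parts we may and do assume ${}_AA\in\sS$. Two elementary observations will be used repeatedly: $\sS\subseteq{}^{\perp_1}(\sS^{\perp_1})=\sF$ tautologically, hence $\Fil(\sS)\subseteq\Fil(\sF)=\sF$ by the Eklof lemma (Lemma~\ref{eklof-lemma}); and $\Fil(\sS)$ is closed under extensions (concatenate the two filtrations), while $\sF$, being a left $\Ext^1$\+orthogonal class, is closed under extensions and direct summands.

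The technical heart is the \emph{$\Ext$\+killing step}: for every left $A$\+module $N$ there is a short exact sequence $0\to N\to D\to E\to 0$ with $E\in\Fil(\sS)$ and $D\in\sS^{\perp_1}$. To produce it I would build a continuous increasing chain of $A$\+modules $N=D_0\subseteq D_1\subseteq\dots$, with $D_j=\bigcup_{i<j}D_i$ at limit ordinals~$j$, as follows. At a successor step, let $I_i$ be the set of all pairs $(S,\xi)$ with $S\in\sS$ and $\xi\in\Ext^1_A(S,D_i)$; this is a set, because $\sS$ is a set and each $\Ext^1_A(S,D_i)$ is an abelian group. Represent each $\xi$ by an extension $0\to D_i\to Y_\xi\to S\to 0$, take the direct sum of these extensions, and push it out along the fold map $\bigoplus_{(S,\xi)\in I_i}D_i\to D_i$; the result is a short exact sequence $0\to D_i\to D_{i+1}\to\bigoplus_{(S,\xi)\in I_i}S\to 0$ in which every $\xi\in\Ext^1_A(S,D_i)$ maps to zero in $\Ext^1_A(S,D_{i+1})$. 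This chain stabilizes: fix a regular cardinal $\kappa$ with $\kappa>|A|$ and $\kappa>|S|$ for all $S\in\sS$, say the successor of $|A|+\sum_{S\in\sS}|S|+\aleph_0$; then $D_\kappa\in\sS^{\perp_1}$. Indeed, any extension $0\to D_\kappa\to W\to S\to 0$ with $S\in\sS$ is described, after choosing a set-theoretic section $S\to W$, by a cocycle $A\times S\times S\to D_\kappa$, and its fewer than $\kappa$ many values all lie in a single $D_i$ with $i<\kappa$ by regularity of~$\kappa$; the submodule of $W$ generated by the section together with $D_i$ then yields a class in $\Ext^1_A(S,D_i)$ mapping to the given one in $\Ext^1_A(S,D_\kappa)$ and already dying in $\Ext^1_A(S,D_{i+1})$, so the given extension splits. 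Taking $D=D_\kappa$ and $E=D_\kappa/N$, which lies in $\Fil(\sS)$ by construction, we obtain a special preenvelope sequence~\eqref{special-preenvelope-sequence} for~$N$, since $D\in\sS^{\perp_1}=\sC$ and $E\in\Fil(\sS)\subseteq\sF$.

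For the special precover sequence I would use Salce's pushout trick. Given a left $A$\+module $M$, pick a short exact sequence $0\to K\to P\to M\to 0$ with $P$ free; since ${}_AA\in\sS$, a free module is filtered by copies of ${}_AA$, so $P\in\Fil(\sS)$. Apply the $\Ext$\+killing step to $K$ to obtain $0\to K\to C\to E\to 0$ with $C\in\sC$ and $E\in\Fil(\sS)$, and form the pushout $Q$ of the maps $K\to P$ and $K\to C$. It sits in a diagram containing the two short exact sequences $0\to P\to Q\to E\to 0$ and $0\to C\to Q\to M\to 0$; from the first, $Q\in\Fil(\sS)$ by closure under extensions, and the second is then a special precover sequence~\eqref{special-precover-sequence} for $M$ --- one with $Q\in\Fil(\sS)\subseteq\sF$. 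This completes part~(a). For part~(b), the inclusion $\Fil(\sS)^\oplus\subseteq\sF$ was noted above; conversely, for $F\in\sF$ the precover sequence just constructed reads $0\to C\to Q\to F\to 0$ with $Q\in\Fil(\sS)$ and $C\in\sC$, and it splits because $\Ext^1_A(F,C)=0$ by the definition of $\sF={}^{\perp_1}\sC$, so $F$ is a direct summand of $Q\in\Fil(\sS)$, that is, $F\in\Fil(\sS)^\oplus$. I expect the delicate point to be the stabilization of the transfinite chain --- the verification that $D_\kappa\in\sS^{\perp_1}$ for the chosen regular cardinal, where the set-theoretic cardinality bookkeeping enters; the remaining steps are routine diagram chasing and manipulation of ordinal-indexed filtrations.
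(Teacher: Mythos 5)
Your proof is correct. The paper establishes this theorem only by citation to \cite{ET} and \cite{GT}, and your argument --- the transfinite $\Ext$\+killing construction producing a special preenvelope with cokernel in $\Fil(\sS)$ (with the regular-cardinal stabilization step done correctly), followed by Salce's pushout trick for the precover sequence and the splitting argument for part~(b) --- is precisely the standard Eklof--Trlifaj proof contained in those references.
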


\begin{proof}
 This is~\cite[Theorems~2 and~10]{ET} or~\cite[Theorem~6.11 and
Corollary~6.14]{GT}.
\end{proof}

\begin{cor} \label{eklof-trlifaj-cor}
 Let $A$ be an associative ring and\/ $\sF\subset A\Modl$ be a class
of left $A$\+modules.
 In this context: \par
\textup{(a)} If the class\/ $\sF$ is deconstructible, then
the cotorsion pair generated by\/ $\sF$ in $A\Modl$ is complete. \par
\textup{(b)} If the class\/ $\sF$ is deconstructible, closed under
direct summands in $A\Modl$, and contains the free $A$\+module
${}_AA$, then\/ $\sF$ is the left-hand part of a complete cotorsion
pair $(\sF,\sC)$ in $A\Modl$.
 In other words, the pair of classes $(\sF,\sF^{\perp_1})$ is
a complete cotorsion pair in $A\Modl$.
\end{cor}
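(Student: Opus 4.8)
The plan is to reduce both assertions to the Eklof--Trlifaj theorem (Theorem~\ref{eklof-trlifaj-theorem}) by replacing the deconstructible class~$\sF$ with a \emph{set} of modules that generates the same cotorsion pair.

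Fix a set $\sS\subset A\Modl$ with $\sF=\Fil(\sS)$, which exists by deconstructibility. I would first record that the operation $\Fil$ is idempotent, i.e.\ $\Fil(\Fil(\sS))=\Fil(\sS)$: given an ordinal-indexed filtration of an $A$\+module whose successive quotients are themselves filtered by modules from~$\sS$, one concatenates the filtrations of the successive quotients into a single ordinal-indexed filtration by modules from~$\sS$. Consequently $\sF$ is closed under transfinitely iterated extensions, and whenever ${}_AA\in\sF$ one has $\Fil(\sS\cup\{{}_AA\})\subseteq\Fil(\sF)=\sF$; so under the hypotheses of part~(b) I may enlarge~$\sS$ and assume ${}_AA\in\sS$ without changing $\Fil(\sS)=\sF$.

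For part~(a): by the Eklof lemma (Lemma~\ref{eklof-lemma}), $\sS^{\perp_1}=\Fil(\sS)^{\perp_1}=\sF^{\perp_1}$. Hence the cotorsion pair generated by the set~$\sS$ and the cotorsion pair generated by the class~$\sF$ have the same right-hand class and therefore coincide; by Theorem~\ref{eklof-trlifaj-theorem}(a) the former is complete, so the latter is complete as well. For part~(b), having arranged ${}_AA\in\sS$, Theorem~\ref{eklof-trlifaj-theorem}(b) identifies the left-hand class of the cotorsion pair generated by~$\sS$ with $\Fil(\sS)^\oplus=\sF^\oplus=\sF$, where the last equality uses that $\sF$ is closed under direct summands. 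Since this cotorsion pair is the one generated by~$\sF$ (as computed for part~(a)), the pair $(\sF,\sF^{\perp_1})$ is a cotorsion pair, and it is complete by part~(a).

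There is no serious obstacle here; the only point calling for an argument rather than a citation is the idempotence of $\Fil$, which underlies the harmless insertion of ${}_AA$ into the generating set. Everything else is a formal manipulation of the two quoted results.
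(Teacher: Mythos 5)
Your proposal is correct and follows essentially the same route as the paper, which simply cites Lemma~\ref{eklof-lemma} and Theorem~\ref{eklof-trlifaj-theorem}; you have merely spelled out the details (passing from the class $\sF$ to a generating set $\sS$, using the Eklof lemma to identify the two cotorsion pairs, and enlarging $\sS$ by ${}_AA$ via the idempotence of $\Fil$). No issues.
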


\begin{proof}
 Follows from Lemma~\ref{eklof-lemma} and
Theorem~\ref{eklof-trlifaj-theorem}.
\end{proof}

\Section{Cotorsion Modules}  \label{cotorsion-modules-secn}

 Let $R$ be an associative ring.
 A left $R$\+module $C$ is said to be \emph{cotorsion}
(or \emph{Enochs cotorsion}~\cite{En2}) if $\Ext^1_R(F,C)=0$ for all
flat left $R$\+modules~$F$.

 Using the facts that all projective left $R$\+modules are flat and
the class of flat modules is closed under kernels of epimorphisms in
$R\Modl$, one can easily show that $\Ext_R^n(F,C)=0$ for all flat
left $R$\+modules $F$, all cotorsion left $R$\+modules $C$, and all
integers $n\ge1$ (cf.~\cite[Lemma~5.24]{GT} and~\cite[Lemma~7.1]{PS6}).
 It follows that the class of cotorsion $R$\+modules is closed under
cokernels of monomorphisms in $R\Modl$; it is also obviously closed
under extensions and infinite products.
 We denote the class of flat left $R$\+modules by $R\Modl_\flat
\subset R\Modl$ and the class of cotorsion left $R$\+modules by
$R\Modl^\cot\subset R\Modl$.

 The most nontrivial known property of cotorsion modules is
the following \emph{cotorsion periodicity theorem} of Bazzoni,
Cort\'es-Izurdiaga, and Estrada~\cite{BCE}.

\begin{thm} \label{cotorsion-periodicity}
 Let $A$ be an associative ring and $C^\bu$ be an (unbounded) acyclic
complex of $A$\+modules such that all the $A$\+modules $C^n$,
\,$n\in\boZ$ are cotorsion.
 Then the $A$\+modules of cocycles of the acyclic complex $C^\bu$ are
cotorsion $A$\+modules, too.
\end{thm}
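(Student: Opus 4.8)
The plan is to recast the statement as the acyclicity of a family of $\Hom$-complexes and then prove that acyclicity by an inverse-limit (Mittag--Leffler type) argument, powered by the known vanishing $\Ext^i_A(F,C)=0$ for $F$ flat, $C$ cotorsion and $i\ge1$. First I would reduce: it suffices to show each cocycle $Z^n=\ker(C^n\to C^{n+1})$ is cotorsion, and replacing $C^\bullet$ by its shifts one may as well treat $Z^0$. (Alternatively one can first pass to the two-periodic short exact sequence $0\to\prod_nZ^n\to\prod_nC^n\to\prod_nZ^n\to0$, whose middle term is cotorsion, and note that each $Z^n$ is a direct summand of $\prod_nZ^n$; but the argument below does not need this.) The key point is that for every flat left $A$\+module $F$ and every $n$ there is a natural isomorphism $H^n(\Hom_A(F,C^\bullet))\cong\Ext^1_A(F,Z^{n-1})$: applying $\Hom_A(F,-)$ to the short exact sequences $0\to Z^n\to C^n\to Z^{n+1}\to0$ and using $\Ext^i_A(F,C^n)=0$ for $i\ge1$ (as $C^n$ is cotorsion and $F$ is flat), a short diagram chase computes the cohomology of $\Hom_A(F,C^\bullet)$. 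Thus the theorem is equivalent to the assertion that $\Hom_A(F,C^\bullet)$ is acyclic for every flat left $A$\+module~$F$.

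The heart of the matter is the case $F=\varinjlim_{n<\omega}P_n$ with each $P_n$ finitely generated free. Here $\Hom_A(F,C^\bullet)=\varprojlim_n\Hom_A(P_n,C^\bullet)$, and the decisive computation is that $\varprojlim^1_n\Hom_A(P_n,C^m)\cong\Ext^1_A(F,C^m)=0$ for every $m$: apply $\Hom_A(-,C^m)$ to the telescope presentation $0\to\bigoplus_nP_n\to\bigoplus_nP_n\to F\to0$, use $\Ext^1_A(\bigoplus_nP_n,C^m)=0$ (the $P_n$ being projective), and then invoke that $F$ is flat and $C^m$ is cotorsion. Consequently the map ``$1$ minus the shift'' is degreewise surjective on $\prod_n\Hom_A(P_n,C^\bullet)$, so $0\to\Hom_A(F,C^\bullet)\to\prod_n\Hom_A(P_n,C^\bullet)\to\prod_n\Hom_A(P_n,C^\bullet)\to0$ is a short exact sequence of complexes. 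Each $\Hom_A(P_n,C^\bullet)$ is acyclic ($P_n$ projective, $C^\bullet$ acyclic), hence so is the product, and the long exact cohomology sequence forces $\Hom_A(F,C^\bullet)$ to be acyclic.

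For an arbitrary flat $F$ I would run the same argument over a Lazard presentation $F=\varinjlim_{i\in I}P_i$, with $I$ directed and the $P_i$ finitely generated free. The inverse system of complexes $(\Hom_A(P_i,C^\bullet))_i$ is objectwise acyclic, hence zero in the derived category of $I$\+indexed inverse systems, so $\mathrm R\varprojlim_I$ of it is acyclic. On the other hand, the standard resolution of a colimit over a directed set gives $\varprojlim^{(s)}_I\Hom_A(P_i,C^m)\cong\Ext^s_A(F,C^m)$, which vanishes for $s\ge1$ ($F$ flat, $C^m$ cotorsion); so the complex of inverse systems $\Hom_A(P_\bullet,C^\bullet)$ is termwise $\varprojlim_I$\+acyclic. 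Since $\varprojlim_I$ over a directed set has finite cohomological dimension (with a bound depending only on $|I|$, by Osofsky and Mitchell), the hyperhomology spectral sequence converges in spite of $C^\bullet$ being unbounded and identifies $\mathrm R\varprojlim_I\Hom_A(P_\bullet,C^\bullet)$ with $\varprojlim_I\Hom_A(P_\bullet,C^\bullet)=\Hom_A(F,C^\bullet)$; comparing the two computations gives the acyclicity. (Alternatively, one reduces the general case to the countable one by a transfinite d\'evissage, using deconstructibility of the class of flat modules and the stability of acyclicity of $\Hom_A(-,C^\bullet)$ under extensions of flats, under transfinite unions along surjective transition maps, and under direct summands.)

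I expect the only genuinely delicate point to be this last step, and the reason is that $C^\bullet$ is unbounded: one cannot truncate it, so the passage from ``$\Hom_A(P_i,C^\bullet)$ acyclic for each $i$'' to ``$\Hom_A(F,C^\bullet)$ acyclic'' must be routed either through finiteness of the cohomological dimension of inverse limits over directed sets or through a carefully organized transfinite induction. The purely homological ingredients — the vanishing $\Ext^i_A(F,C)=0$ for $i\ge1$ when $F$ is flat and $C$ cotorsion, and the identification of the relevant higher derived inverse limits with $\Ext$ groups — are the conceptual core but are routine; all the care lies in the limit constructions and in keeping control of the unbounded complex.
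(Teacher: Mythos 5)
The paper does not actually prove Theorem~\ref{cotorsion-periodicity}: it is quoted from \cite[Theorem~5.1(2)]{BCE}, so you are supplying an independent argument, and your overall strategy is essentially the one used in the published proofs. Your reduction is correct: the identification $H^{n+1}(\Hom_A(F,C^\bullet))\cong\Ext^1_A(F,Z^n)$ is valid because $\Ext^{i}_A(F,C^n)=0$ for $i\ge1$, so the theorem is indeed equivalent to the acyclicity of $\Hom_A(F,C^\bullet)$ for every flat $F$; and your treatment of countably presented flat modules via the telescope presentation and the resulting short exact sequence of complexes is complete and is the core computation.

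The general case, however, has a genuine gap. Your main route rests on the claim that $\varprojlim_I$ over a directed set has finite cohomological dimension ``with a bound depending only on $|I|$.'' Mitchell's theorem gives $\varprojlim_I^{(s)}=0$ for $s\ge n+2$ only when $|I|=\aleph_n$ with $n$ finite; for $|I|\ge\aleph_\omega$ no finite bound is available, and without one the hypercohomology spectral sequence for the unbounded complex $C^\bullet$ does not converge and the identification of $\mathrm{R}\varprojlim_I$ with the termwise limit fails. Your fallback via deconstructibility does not close this: flat modules are filtered by $(|R|+\aleph_0)$\+presented flat modules, not by countably presented ones, so the d\'evissage (which is itself fine --- it is exactly Lemma~\ref{eklof-lemma} applied to ${}^{\perp_1}\{Z^n\}$, the successive quotients being flat) only reduces you to directed sets of cardinality $|R|+\aleph_0$, which is again $\ge\aleph_\omega$ for large rings. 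So as written you have a proof only for rings of cardinality $<\aleph_\omega$. The repair is to drop all cardinality considerations: take the full bar resolution $\dotsb\to B_1\to B_0\to F\to0$ of the Lazard presentation $F=\varinjlim_{i\in I}P_i$ (each $B_p$ a direct sum of the $P_i$, hence projective) and form the double complex $D^{p,q}=\Hom_A(B_p,C^q)$, $p\ge0$. Its columns $\Hom_A(B_p,C^\bullet)$ are products of acyclic complexes, hence acyclic; since $D$ is bounded below in~$p$, its product totalization is the inverse limit of a tower of acyclic complexes with degreewise surjective transition maps, hence acyclic by the Milnor sequence. Its rows satisfy $H^p(\Hom_A(B_\bullet,C^q))=\Ext^p_A(F,C^q)=0$ for $p\ge1$, so the cone of the augmentation $\Hom_A(F,C^\bullet)\to\mathrm{Tot}^{\Pi}(D)$ is the product totalization of a right half-plane double complex with exact rows and is acyclic by the acyclic assembly lemma. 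Hence $\Hom_A(F,C^\bullet)$ is acyclic for every flat $F$ over an arbitrary ring, with no appeal to Mitchell or Osofsky.
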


\begin{proof}
 This is~\cite[Theorem~5.1(2)]{BCE}.
 See also~\cite[Theorem~1.2(2) or Proposition~4.8(2)]{BCE} for
essentially equivalent formulations of this theorem.
 A further discussion in category-theoretic context can be found
in~\cite[Corollary~9.2 and Proposition~9.4]{PS6}.
\end{proof}

\begin{cor} \label{cotorsion-resolution-cor}
 Let $A$ be an associative ring and $M$ be an $A$\+module.
 Assume that $M$ admits a (left) resolution by cotorsion $A$\+modules,
i.~e., there exists an exact sequence of $A$\+modules
$$
 \dotsb\lrarrow C_2\lrarrow C_1\lrarrow C_0\overset p\lrarrow M\lrarrow0
$$
with cotorsion $A$\+modules $C_n$, \,$n\ge0$.
 Then $M$ is a cotorsion $A$\+module.
\end{cor}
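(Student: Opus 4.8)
The plan is to complete the given one-sided resolution of $M$ into a two-sided acyclic complex all of whose terms are cotorsion, and then to apply the cotorsion periodicity theorem (Theorem~\ref{cotorsion-periodicity}).

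First, observe that every injective $A$\+module $E$ is cotorsion, since $\Ext^1_A(F,E)=0$ for \emph{every} $A$\+module $F$, in particular for every flat one. As the category $A\Modl$ has enough injectives, we can choose an injective coresolution $0\rarrow M\rarrow E^0\rarrow E^1\rarrow E^2\rarrow\dotsb$; all the modules $E^j$ are injective, hence cotorsion. Composing the surjection $p\:C_0\rarrow M$ with the monomorphism $M\rarrow E^0$ yields a homomorphism $C_0\rarrow E^0$, and I would form the complex
$$
 \dotsb\lrarrow C_{-2}\lrarrow C_{-1}\lrarrow C_0\lrarrow E^0\lrarrow E^1
 \lrarrow E^2\lrarrow\dotsb,
$$
viewed as a complex indexed by~$\boZ$. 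All of its terms are cotorsion $A$\+modules, and it is unbounded (on the left because the given resolution is infinite, on the right after padding the injective coresolution if necessary).

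It remains to check that this complex is acyclic, and the only nonobvious place is at the two terms $C_0$ and $E^0$ where the splicing occurs. At $C_0$, the image of $C_{-1}\rarrow C_0$ is $\ker p$, which coincides with the kernel of $C_0\rarrow E^0$ because $M\rarrow E^0$ is a monomorphism. At $E^0$, the image of $C_0\rarrow E^0$ equals the image of $M$ in $E^0$ (as $p$ is surjective), which is precisely the kernel of $E^0\rarrow E^1$. Exactness at all the other terms is immediate from the choice of the two resolutions. Now Theorem~\ref{cotorsion-periodicity} shows that all the modules of cocycles of this acyclic complex of cotorsion modules are cotorsion; since $M$ is isomorphic to the module of cocycles $\ker(E^0\rarrow E^1)$, it follows that $M$ is cotorsion.

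There is really no obstacle here beyond the idea itself: one turns the one-sided cotorsion resolution into a two-sided acyclic complex of cotorsion modules (using that injective modules are cotorsion), and then invokes cotorsion periodicity. One could equally well construct a right coresolution of $M$ by cotorsion modules directly from completeness of the flat cotorsion pair, but using injective modules keeps the required input to a minimum.
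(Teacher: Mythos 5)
Your proof is correct and follows essentially the same route as the paper: splice the given left resolution with an injective (hence cotorsion) coresolution of $M$ into a two-sided acyclic complex of cotorsion modules, and apply the cotorsion periodicity theorem to recover $M$ as a module of cocycles. The extra verification of exactness at the splice point is a welcome but inessential elaboration of the paper's argument.
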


\begin{proof}
 This is~\cite[Corollary~1.6.4]{Pcosh}.
 The point is that any $A$\+module can be embedded into a cotorsion
$A$\+module (in fact, all injective $A$\+modules are cotorsion by
definition).
 Therefore, any $A$\+module $M$ has a coresolution (right resolution)
by cotorsion $A$\+modules, i.~e., there is an exact sequence of
$A$\+modules
$$
 0\lrarrow M\overset i\lrarrow C^0\lrarrow C^1\lrarrow C^2
 \lrarrow\dotsb
$$
with cotorsion $A$\+modules $C^n$, \,$n\ge0$.
 Now the combined complex (the resulting two-sided resolution of~$M$)
$$
 \dotsb\lrarrow C_2\lrarrow C_1\lrarrow C_0\overset{ip}\lrarrow
 C^0\lrarrow C^1\lrarrow C^2\lrarrow\dotsb
$$
is an acyclic complex of cotorsion $A$\+modules, and $M$ is one of its
$A$\+modules of cocycles.
 So Theorem~\ref{cotorsion-periodicity} is applicable.
\end{proof}

 The following three lemmas are more elementary.
 We use the notation $\Hom_R(L,M)$ for the abelian group of morphisms
$L\rarrow M$ in the category of left $R$\+modules $R\Modl$.
 The group of homomorphisms of right $R$\+modules $N\rarrow Q$ will
be denoted by $\Hom_{R^\rop}(N,Q)$.

\begin{lem} \label{cotorsion-restriction-of-scalars}
 Let $R\rarrow A$ be a homomorphism of associative rings and $D$ be
a cotorsion $A$\+module.
 Then the underlying $R$\+module of $D$ is a cotorsion $R$\+module.
\end{lem}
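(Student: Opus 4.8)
The plan is to show that $\Ext^1_R(F,D)=0$ for every flat left $R$\+module~$F$, by identifying this group with an $\Ext$ group computed over~$A$. The first observation is that if $F$ is a flat left $R$\+module, then the induced left $A$\+module $A\ot_RF$ is flat: for any right $A$\+module~$N$, one has a natural isomorphism $N\ot_A(A\ot_RF)\cong N\ot_RF$ (with $N$ viewed as a right $R$\+module via $R\rarrow A$), and the functor $N\longmapsto N\ot_RF$ is exact because $F$ is flat over~$R$. No assumption on $A$ as an $R$\+module is needed for this step.

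Next I would pick a short exact sequence of left $R$\+modules $0\rarrow K\rarrow P\rarrow F\rarrow0$ with $P$ a projective $R$\+module. Since $F$ is flat over~$R$, one has $\Tor_1^R(A,F)=0$, so applying the functor $A\ot_R{-}$ produces a short exact sequence of left $A$\+modules $0\rarrow A\ot_RK\rarrow A\ot_RP\rarrow A\ot_RF\rarrow0$ in which $A\ot_RP$ is a projective left $A$\+module.

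Now apply the functor $\Hom_A({-},D)$ to this short exact sequence of $A$\+modules, and the functor $\Hom_R({-},D)$ to the original short exact sequence of $R$\+modules. Using the adjunction isomorphism $\Hom_A(A\ot_RM,D)\cong\Hom_R(M,D)$ between extension and restriction of scalars, natural in the left $R$\+module~$M$, the first three terms of the two resulting exact sequences are identified compatibly (the connecting map $\Hom_R(P,D)\rarrow\Hom_R(K,D)$ is the same in both). Comparing the cokernels of this map, and using that $P$ is projective over~$R$ while $A\ot_RP$ is projective over~$A$ (so the relevant $\Ext^1$ groups at $P$ and $A\ot_RP$ vanish), one obtains $\Ext^1_R(F,D)\cong\Ext^1_A(A\ot_RF,D)$. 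Since $A\ot_RF$ is a flat left $A$\+module and $D$ is a cotorsion $A$\+module, the right-hand side vanishes, hence so does $\Ext^1_R(F,D)$, as required.

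The only point requiring care is the exactness of the sequence $0\rarrow A\ot_RK\rarrow A\ot_RP\rarrow A\ot_RF\rarrow0$: here it is essential that $F$ be flat over~$R$ (equivalently $\Tor_1^R(A,F)=0$), which is precisely the hypothesis at our disposal, since cotorsion modules need only be tested against flat modules. Everything else is a routine diagram chase with the adjunction isomorphism.
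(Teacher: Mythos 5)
Your proof is correct and follows essentially the same route as the paper: the key points in both are that induction $A\ot_R{-}$ takes flat $R$\+modules to flat $A$\+modules and that the adjunction $\Hom_A(A\ot_RM,D)\cong\Hom_R(M,D)$ transfers the vanishing of $\Ext^1_A$ down to $\Ext^1_R$. Your cokernel comparison along a projective presentation is just a hands-on proof of the degree-one case of the change-of-rings isomorphism $\Ext^1_A(A\ot_RF,D)\cong\Ext^1_R(F,D)$ that the paper invokes from Cartan--Eilenberg (its Lemma on Ext from induced modules).
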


\begin{proof}
 This is~\cite[Lemma~1.3.4(a)]{Pcosh} or~\cite[Lemma~1.6(a)]{Pdomc}.
 One can use the $\Ext^1$\+ad\-junc\-tion of change of rings as
in~\cite[Proposition~VI.4.1.3]{CE} (see
Lemma~\ref{ext-from-induced-from-flat} below).
 Alternatively, the following elementary argument works.
 A left $R$\+module $C$ is cotorsion if and only if the functor
$\Hom_R({-},C)$ takes every short exact sequence of flat left
$R$\+modules to a short exact sequence of abelian groups.
 Given a short exact sequence of flat left $R$\+modules $0\rarrow H
\rarrow G\rarrow F\rarrow0$, we have a short exact sequence of flat
left $A$\+modules $0\rarrow A\ot_RH\rarrow A\ot_RG\rarrow A\ot_RF
\rarrow0$.
 By assumption, the $A$\+module $D$ is cotorsion, so the functor
$\Hom_A({-},D)$ takes the latter short exact sequence of $A$\+modules
to a short exact sequence of abelian groups.
 It remains to recall the adjunction isomorphism of abelian groups
$\Hom_A(A\ot_RM,\>D)\simeq\Hom_R(M,D)$, which holds for any left
$R$\+module~$M$.
 The assertion of the lemma is also a special case of the next
Lemma~\ref{Hom-module-cotorsion}.
\end{proof}

\begin{lem} \label{Hom-module-cotorsion}
 Let $R$ and $S$ be two associative rings, $K$ be an $R$\+$S$\+bimodule
that is flat as a left $R$\+module, and $C$ be a cotorsion left
$R$\+module.
 Then the left $S$\+module\/ $\Hom_R(K,C)$ is cotorsion.
\end{lem}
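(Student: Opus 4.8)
The plan is to reduce the statement to the elementary characterization of cotorsion modules in terms of $\Hom$ applied to short exact sequences of flat modules, exactly as in the second argument of Lemma~\ref{cotorsion-restriction-of-scalars}. First I recall that a left $S$-module $C'$ is cotorsion if and only if, for every short exact sequence of flat left $S$-modules $0\to H\to G\to F\to 0$, the sequence of abelian groups $0\to\Hom_S(F,C')\to\Hom_S(G,C')\to\Hom_S(H,C')\to 0$ is exact; the only nontrivial part is right exactness, i.e.\ surjectivity of $\Hom_S(G,C')\to\Hom_S(H,C')$, which is exactly the vanishing $\Ext^1_S(F,C')=0$. So I fix such a short exact sequence of flat left $S$-modules and aim to show that $\Hom_S\bigl(G,\Hom_R(K,C)\bigr)\to\Hom_S\bigl(H,\Hom_R(K,C)\bigr)$ is surjective.

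The key step is the tensor-hom adjunction. For any left $S$-module $M$ there is a natural isomorphism of abelian groups $\Hom_S\bigl(M,\Hom_R(K,C)\bigr)\simeq\Hom_R(K\ot_S M,\>C)$, coming from the standard adjunction between $K\ot_S{-}$ and $\Hom_R(K,-)$ (here $K\ot_S M$ is a left $R$-module via the left $R$-action on $K$). Applying this to $F$, $G$, $H$ turns the sequence we want to be exact into $0\to\Hom_R(K\ot_S F,\>C)\to\Hom_R(K\ot_S G,\>C)\to\Hom_R(K\ot_S H,\>C)\to 0$. Now the point is that $0\to K\ot_S H\to K\ot_S G\to K\ot_S F\to 0$ is a short exact sequence of \emph{flat left $R$-modules}: it is exact because $F$ is a flat, hence torsion-free-like, $S$-module — more precisely, $\Tor_1^S(K,F)=0$ since $F$ is flat — and each term is a flat left $R$-module because $K$ is flat over $R$ and $F$, $G$, $H$ are flat over $S$, so $K\ot_S(-)$ of a flat $S$-module is a flat $R$-module (a direct limit of copies of $K$, up to the usual Lazard-style argument, or simply: flat $\ot_S$ flat is flat as a bimodule construction). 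Since $C$ is a cotorsion left $R$-module, $\Hom_R(-,C)$ carries this short exact sequence of flat left $R$-modules to a short exact sequence of abelian groups, which is precisely what we needed.

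The main obstacle — really the only place requiring care — is verifying that $K\ot_S G$ and the other two tensor products are genuinely flat as left $R$-modules, and that the sequence stays short exact; both reduce to the flatness of $F$ as an $S$-module together with the flatness of $K$ as a left $R$-module, but one should state cleanly why "flat over $S$ tensored over $S$ with a left-$R$-flat bimodule is flat over $R$". The cleanest route is: for a flat left $S$-module $N$ and a left $R^\rop$-module (right $R$-module) $L$, one has $L\ot_R(K\ot_S N)\simeq(L\ot_R K)\ot_S N$, and $L\ot_R K$ is exact in $L$ because $K$ is flat over $R$, while $(-)\ot_S N$ is exact because $N$ is flat over $S$; composing, $L\mapsto L\ot_R(K\ot_S N)$ is exact, i.e.\ $K\ot_S N$ is flat over $R$. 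Exactness of $0\to K\ot_S H\to K\ot_S G\to K\ot_S F\to 0$ then follows from $\Tor^S_1(K,F)=0$. With these observations in place the argument closes immediately. Note this lemma indeed generalizes Lemma~\ref{cotorsion-restriction-of-scalars}: take $S=A$ and $K=A$ with its canonical $R$-$A$-bimodule structure (flat, even projective, over $R$ by hypothesis there — though here we only need flatness), so that $\Hom_R(A,C)$ with its left $A$-module structure recovers, after restriction, the statement about the underlying $R$-module.
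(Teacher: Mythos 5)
Your proof is correct and follows essentially the route the paper indicates: the whole content is the observation that $K\ot_S N$ is a flat left $R$\+module whenever $N$ is a flat left $S$\+module, combined with tensor--Hom adjunction; you have simply written out the elementary Hom-level version (via the characterization of cotorsion modules by exactness of $\Hom$ on short exact sequences of flat modules), which is the same ``elementary argument'' the paper's proof points to as an alternative to the $\Ext^1$\+adjunction formulation. One caveat about your closing aside: the specialization recovering Lemma~\ref{cotorsion-restriction-of-scalars} is not the one you describe. That lemma has \emph{no} flatness or projectivity hypothesis on $A$ over $R$; to obtain it from the present lemma one takes the lemma's ``$R$'' to be $A$ and ``$S$'' to be $R$, with $K={}_AA_R$, which is free hence flat as a left $A$\+module, giving $\Hom_A(A,D)\simeq D$ cotorsion over $R$. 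Your choice ($S=A$, $K={}_RA_A$ flat over $R$) instead yields the statement that coinduction $\Hom_R(A,{-})$ sends cotorsion $R$\+modules to cotorsion $A$\+modules, which is the ingredient behind Proposition~\ref{induction-preserves-cotorsion}, not Lemma~\ref{cotorsion-restriction-of-scalars}.
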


\begin{proof}
 This is~\cite[Lemma~1.3.3(a)]{Pcosh}.
 The point is that the tensor product $K\ot_SG$ is a flat left
$R$\+module for any flat left $S$\+module~$G$.
 Based on this observation, one can use the $\Ext^1$\+ad\-junc\-tion of
the tensor product and Hom functors as in~\cite[formula~(4) in
Section~XVI.4]{CE}, \cite[formula~(1.1) in Section~1.2]{Pcosh},
or~\cite[Lemma~1.1]{Pdomc}.
 Alternatively, an elementary argument can be found
in~\cite[proof of Lemma~1.5(a)]{Pdomc}.
\end{proof}

\begin{lem} \label{bimodule-dualization-lemma}
 Let $R\rarrow A$ be a homomorphism of associative rings making $A$
a finitely generated projective right $R$\+module.
 Consider the $R$\+$A$\+bimodule
$$
 K=\Hom_{R^\rop}(A,R),
$$
with the left $R$\+module structure on $K$ coming from the left
$R$\+module structure on $R$ and the right $A$\+module structure
on $K$ induced by the left $A$\+module structure on~$A$.
 Then, for every left $R$\+module $M$, there is a natural isomorphism
of left $A$\+modules
$$
 A\ot_RM\simeq\Hom_R(K,M),
$$
where the left $A$\+module structure on $A\ot_RM$ comes from the left
$A$\+module structure on $A$ and the left $A$\+module structure on\/
$\Hom_R(K,M)$ comes from the right $A$\+module structure on~$K$.
\end{lem}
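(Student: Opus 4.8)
The plan is to realize the asserted isomorphism as the standard ``evaluation'' natural transformation relating tensor and Hom for a finitely generated projective module, and to verify it by reduction to the rank-one free case.

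For an arbitrary right $R$-module $P$, write $P^\vee=\Hom_{R^\rop}(P,R)$ for the left $R$-module structure coming from the left $R$-module structure on the target $R$ (so $(r\phi)(p)=r\,\phi(p)$). For any left $R$-module $M$, I would define a map
$$\theta_P^M\colon P\ot_RM\rarrow\Hom_R(P^\vee,M),\qquad \theta_P^M(p\ot m)\colon\phi\longmapsto\phi(p)\,m.$$
A short check shows that the rule $(p,m)\mapsto[\phi\mapsto\phi(p)m]$ is additive and $R$-balanced (here one uses that $\phi$ is a homomorphism of \emph{right} $R$-modules, so $\phi(pr)=\phi(p)r$), and that $\theta_P^M(p\ot m)$ is indeed a morphism of left $R$-modules; hence $\theta_P^M$ is well-defined. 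It is visibly natural in $M$, and a similarly routine computation (using that $({-})^\vee$ acts contravariantly on homomorphisms of right $R$-modules, and hence covariantly by precomposition on $\Hom_R(({-})^\vee,M)$) shows that it is natural in $P$.

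The key observation is that $\theta_P^M$ is an isomorphism whenever $P$ is a finitely generated projective right $R$-module. For $P=R$ one has the canonical isomorphism $R^\vee\simeq{}_RR$ given by evaluation at $1$, under which $\Hom_R(R^\vee,M)\simeq M$ and $R\ot_RM\simeq M$, and a direct inspection shows $\theta_R^M$ becomes the identity map of $M$. Since the functors $P\ot_R({-})$, $({-})^\vee$ (which takes finite direct sums to finite direct sums), and $\Hom_R(({-})^\vee,M)$ all commute with finite direct sums, $\theta_{R^n}^M$ is an isomorphism for all $n\ge0$; and since $\theta$ is natural in $P$, passing to a direct summand shows that $\theta_P^M$ is an isomorphism for every finitely generated projective right $R$-module~$P$.

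It remains to apply this with $P=A$. By hypothesis $A$ is a finitely generated projective right $R$-module, and $A^\vee=\Hom_{R^\rop}(A,R)=K$, so $\theta_A^M\colon A\ot_RM\rarrow\Hom_R(K,M)$ is an isomorphism of abelian groups, natural in~$M$. Finally, both sides carry left $A$-module structures: on $A\ot_RM$ via left multiplication in $A$, and on $\Hom_R(K,M)$ via the right $A$-module structure on $K$, which is given by $(\kappa a)(x)=\kappa(ax)$ for $\kappa\in K$ and $a,x\in A$, so that $(a\cdot g)(\kappa)=g(\kappa a)$. Then $\theta_A^M(ba\ot m)(\kappa)=\kappa(ba)\,m=(\kappa b)(a)\,m=\bigl(b\cdot\theta_A^M(a\ot m)\bigr)(\kappa)$, so $\theta_A^M$ is a homomorphism, hence an isomorphism, of left $A$-modules. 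The only thing requiring attention throughout is the bookkeeping of the various one-sided module structures; no genuine difficulty arises, the conceptual content being entirely concentrated in the reduction to the case $P=R$.
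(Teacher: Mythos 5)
Your proof is correct and takes essentially the same route as the paper: the paper's proof simply invokes, for a bimodule $B$ finitely generated and projective as a right $R$\+module, the standard natural isomorphism $B\ot_RM\simeq\Hom_R(\Hom_{R^\rop}(B,R),M)$, which is exactly the evaluation map $\theta$ you construct and verify by reduction to the rank-one free case and passage to finite direct sums and direct summands. Your explicit check of the left $A$\+module compatibility at the end supplies the bookkeeping that the paper handles by formulating the isomorphism as one of left $S$\+modules (with $S=A$) from the outset.
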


\begin{proof}
 More generally, for any two associative rings $R$ and $S$, let
$B$ be an $S$\+$R$\+bimodule that is finitely generated and projective
as a right $R$\+module, and let $M$ be a left $R$\+module.
 Then the $R$\+$S$\+bimodule $L=\Hom_{R^\rop}(B,R)$ is finitely
generated and projective as a left $R$\+module, there is a natural
isomorphism of $S$\+$R$\+bimodules $B\simeq\Hom_R(L,R)$, and there is
a natural isomorphism of left $S$\+modules $B\ot_RM\simeq\Hom_R(L,M)$.
 Now one can put $B=S=A$ and $L=K$.
\end{proof}

\begin{prop} \label{induction-preserves-cotorsion}
 Let $R\rarrow A$ be a homomorphism of associative rings making $A$
a finitely generated projective right $R$\+module.
 Let $C$ be a cotorsion left $R$\+module.
 Then the left $A$\+module $A\ot_RC$ is cotorsion.
\end{prop}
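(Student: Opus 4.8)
The plan is to prove this by combining Lemma~\ref{bimodule-dualization-lemma} with Lemma~\ref{Hom-module-cotorsion}, using the $R$\+$A$\+bimodule $K=\Hom_{R^\rop}(A,R)$ as the bridge between the induction functor $A\ot_R{-}$ and a coinduction-type functor $\Hom_R(K,{-})$. The point is that $A\ot_RC$, which a priori looks like a tensor product and so need not obviously inherit any orthogonality property from $C$, is actually computed by a Hom out of a flat bimodule, and Hom out of a flat bimodule is exactly the operation that Lemma~\ref{Hom-module-cotorsion} shows preserves cotorsion.

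First I would record that, because $A$ is a finitely generated projective right $R$\+module, the left $R$\+module $K=\Hom_{R^\rop}(A,R)$ is finitely generated and projective as a left $R$\+module; this is already noted in the proof of Lemma~\ref{bimodule-dualization-lemma} (with $B=S=A$ and $L=K$). In particular $K$ is flat as a left $R$\+module. Hence Lemma~\ref{Hom-module-cotorsion}, applied with $S=A$ and with the given cotorsion left $R$\+module $C$, yields that $\Hom_R(K,C)$ is a cotorsion left $A$\+module, where the left $A$\+module structure on $\Hom_R(K,C)$ comes from the right $A$\+module structure on~$K$.

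Second I would invoke Lemma~\ref{bimodule-dualization-lemma} with $M=C$, which provides a natural isomorphism of left $A$\+modules $A\ot_RC\simeq\Hom_R(K,C)$ — and the left $A$\+module structure appearing there on $\Hom_R(K,C)$ is precisely the one coming from the right $A$\+module structure on $K$, i.e.\ the same structure for which the previous step produced a cotorsion module. Transporting the cotorsion property across this isomorphism shows that $A\ot_RC$ is a cotorsion left $A$\+module, as claimed.

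I do not expect a genuine obstacle here: given the two preceding lemmas, the argument is a short diagram chase at the level of functors. The only point requiring care is the bookkeeping of module structures — one must check that the left $A$\+action on $\Hom_R(K,C)$ used in Lemma~\ref{Hom-module-cotorsion} (coming from $K$ being a right $A$\+module) coincides with the left $A$\+action featuring in the isomorphism of Lemma~\ref{bimodule-dualization-lemma}. Once these two descriptions of the same $A$\+module are identified, the proposition follows immediately. (Alternatively, one could give a self-contained proof imitating the elementary argument in Lemma~\ref{cotorsion-restriction-of-scalars}, testing cotorsion of $A\ot_RC$ against short exact sequences of flat left $A$\+modules and using the adjunction for $A\ot_R{-}$ together with flatness of $K$, but routing through the two lemmas above is cleaner.)
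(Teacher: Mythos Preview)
Your proof is correct and follows exactly the paper's own argument: use Lemma~\ref{bimodule-dualization-lemma} to identify $A\ot_RC$ with $\Hom_R(K,C)$ for $K=\Hom_{R^\rop}(A,R)$, observe that $K$ is finitely generated projective (hence flat) as a left $R$\+module, and then apply Lemma~\ref{Hom-module-cotorsion} with $S=A$. Your extra care about matching the two left $A$\+module structures on $\Hom_R(K,C)$ is well placed but not an additional ingredient beyond what the paper does.
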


\begin{proof}
 By Lemma~\ref{bimodule-dualization-lemma}, we have a left $A$\+module
isomorphism $A\ot_RC\simeq\Hom_R(K,C)$, where $K=\Hom_{R^\rop}(A,R)$.
 The $R$\+$A$\+bimodule $K$ is finitely generated and projective,
hence flat, as a left $R$\+module.
 So Lemma~\ref{Hom-module-cotorsion} is applicable for the ring $S=A$.
\end{proof}

\begin{lem} \label{bar-resolution-lemma}
 Let $R\rarrow A$ be a homomorphism of associative rings.
 Then every left $A$\+module $M$ admits a natural relative bar
resolution of the form
\begin{equation} \label{bar-resolution}
 \dotsb\rarrow A\ot_RA\ot_RA\ot_RM\rarrow
 A\ot_RA\ot_RM\rarrow A\ot_RM\rarrow M\rarrow0.
\end{equation}
\end{lem}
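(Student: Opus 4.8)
The plan is to write down the standard (relative) bar construction explicitly and then verify exactness by producing a contracting homotopy after restriction of scalars to $R$. In degree $n\ge0$ put $B_n(M) = A^{\ot_R(n+1)}\ot_RM$, i.e.\ the $(n+1)$\+fold tensor power of $A$ over $R$ tensored with $M$, viewed as a left $A$\+module via the leftmost copy of $A$. The differential $d\:B_n(M)\rarrow B_{n-1}(M)$ is the usual alternating sum $d=\sum_{i=0}^{n}(-1)^i d_i$, where $d_i$ multiplies the $i$\+th and $(i{+}1)$\+st tensor factors together (for $0\le i\le n-1$), and $d_n$ applies the $A$\+action map $A\ot_RM\rarrow M$ to the last two factors; the augmentation $B_0(M)=A\ot_RM\rarrow M$ is the action map. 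First I would check that $d^2=0$, which is the routine simplicial-identity computation $d_i d_j = d_{j-1} d_i$ for $i<j$, and note that each $B_n(M)$ is genuinely a left $A$\+module and each $d_i$ for $i\ge1$ (and the augmentation) is $A$\+linear, so $d$ is a morphism of left $A$\+modules and~\eqref{bar-resolution} is a complex in $A\Modl$.

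The key step is exactness. Here I would forget the $A$\+module structure and regard~\eqref{bar-resolution} as a complex of left $R$\+modules, and exhibit a contracting homotopy $s_n\: B_n(M)\rarrow B_{n+1}(M)$ (and $s_{-1}\:M\rarrow B_0(M)$) given by inserting a unit $1\in A$ in the leftmost slot: $s(a_0\ot\dots) = 1\ot a_0\ot\dots$ and $s_{-1}(m)=1\ot m$. These maps are only $R$\+linear, not $A$\+linear, which is exactly why the statement only claims exactness as a sequence of abelian groups / $R$\+modules and not split exactness over $A$. Then I would verify the standard identity $ds+sd=\mathrm{id}$ in each degree: the $d_0$\+term of $d$ applied after $s$ multiplies the inserted $1$ back into $a_0$, giving the identity, while all the other terms cancel against $sd$ by the simplicial identities. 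This shows the complex of $R$\+modules is contractible, hence exact, and exactness is a property that does not depend on the module structure, so~\eqref{bar-resolution} is exact in $A\Modl$ as well.

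The main obstacle, such as it is, is purely bookkeeping: getting the signs and the indices in the contracting-homotopy identity $ds+sd=\mathrm{id}$ to line up, since $s$ raises degree by inserting a factor on the left and one must track how $d_i$ and $s$ commute. There is no conceptual difficulty — the construction is the relative version of the classical bar resolution of $A$ as an $A\ot_R A^{\rop}$\+module, tensored with $M$ — and naturality in $M$ is immediate since every structure map in sight is built from the multiplication of $A$ and the action on $M$. I would close by remarking that the successive terms $A\ot_R\bigl(A^{\ot_R n}\ot_R M\bigr)$ are induced from left $R$\+modules, which is the feature that makes this resolution useful in the sequel.
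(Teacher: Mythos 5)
Your proposal is correct and follows essentially the same route as the paper: write down the relative bar complex with its standard differential, observe that the differential is $A$\+linear while acyclicity is checked by the $R$\+linear (not $A$\+linear) contracting homotopy that inserts the unit $1\in A$ in the leftmost slot. The only difference is that you spell out the simplicial bookkeeping ($d^2=0$ and $ds+sd=\mathrm{id}$) a bit more explicitly than the paper does.
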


\begin{proof}
 The differential in the relative bar resolution is given by
the standard formula
\begin{align*}
 \partial(a_0\ot a_1\ot\dotsb\ot a_n\ot m) &=
 a_0a_1\ot a_2\ot a_3\ot\dotsb\ot a_n\ot m \\
 &- a_0\ot a_1a_2\ot a_3\ot a_4\ot\dotsb\ot a_n\ot m + \dotsb \\
 &+ (-1)^{n-1} a_0\ot a_1\ot\dotsb\ot a_{n-2}\ot a_{n-1}a_n\ot m \\
 &+ (-1)^n a_0\ot a_1\ot\dotsb\ot a_{n-1}\ot a_nm,
\end{align*}
for all $a_0$,~\dots, $a_n\in A$, \ $n\ge0$, and $m\in M$.
 It is clear from this formula that $\partial$~is a left
$A$\+module morphism (where the left $A$\+module structure on
the tensor products comes from the left action of $A$ on the leftmost
tensor factor~$A$).
 One can easily compute that~\eqref{bar-resolution} is a complex,
i.~e., $\partial^2=0$.

 The lemma claims that the complex~\eqref{bar-resolution}
is acyclic.
 In fact, the complex~\eqref{bar-resolution} is contractible
\emph{as a complex of left $R$\+modules} (but \emph{not} as a complex
of left $A$\+modules).
 The natural $R$\+linear contracting homotopy~$h$ is given by
the formula
$$
 h(a_1\ot a_2\ot\dotsb\ot a_n\ot m)=
 1\ot a_1\ot a_2\ot\dotsb\ot a_n\ot m,
$$
where $1\in A$ is the unit element.
\end{proof}

 Now we can prove the main result of this section, which is also
the first main result of this paper.

\begin{thm} \label{A-cotorsion=R-cotorsion-theorem}
 Let $R\rarrow A$ be a homomorphism of associative rings such that
$A$ is a finitely generated projective right $R$\+module.
 Then a left $A$\+module $D$ is cotorsion as an $A$\+module if and only
if it is cotorsion as an $R$\+module.
\end{thm}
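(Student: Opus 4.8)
The plan is to prove the two implications separately. The implication ``cotorsion over~$A$ $\Rightarrow$ cotorsion over~$R$'' is precisely Lemma~\ref{cotorsion-restriction-of-scalars}, so no new work is needed in that direction. The content of the theorem is the converse: if the underlying left $R$\+module of a left $A$\+module $D$ is cotorsion, then $D$ is cotorsion over~$A$.

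For the converse I would exhibit a \emph{left} resolution of $D$ by cotorsion left $A$\+modules and then invoke Corollary~\ref{cotorsion-resolution-cor}. The obvious candidate is the relative bar resolution of Lemma~\ref{bar-resolution-lemma},
$$
 \dotsb\rarrow A\ot_RA\ot_RA\ot_RD\rarrow A\ot_RA\ot_RD\rarrow A\ot_RD\rarrow D\rarrow0,
$$
which is exact (indeed $R$\+contractible). So it suffices to check that each term $A^{\ot_R n}\ot_RD$ with $n\ge1$ tensor factors~$A$ is cotorsion as a left $A$\+module; then Corollary~\ref{cotorsion-resolution-cor} gives that $D$ is a cotorsion $A$\+module.

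The cotorsion property of the bar terms I would establish by induction on~$n$, writing $A^{\ot_R n}\ot_RD=A\ot_R\bigl(A^{\ot_R(n-1)}\ot_RD\bigr)$ and alternating two already-available facts. Proposition~\ref{induction-preserves-cotorsion} says that $A\ot_R({-})$ carries a cotorsion left $R$\+module to a cotorsion left $A$\+module (this is where finite projectivity of $A$ over $R$, via the bimodule duality of Lemma~\ref{bimodule-dualization-lemma}, actually enters), while Lemma~\ref{cotorsion-restriction-of-scalars} says that a cotorsion left $A$\+module is cotorsion over~$R$. Starting from the hypothesis that $D$ is cotorsion over~$R$, one ``ping\+pongs'': $A\ot_RD$ is cotorsion over~$A$, hence over~$R$; therefore $A\ot_RA\ot_RD$ is cotorsion over~$A$, hence over~$R$; and so on, so that all bar terms $A^{\ot_R n}\ot_RD$ ($n\ge1$) are cotorsion $A$\+modules. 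This completes the argument.

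I do not expect a genuine obstacle here: the only non\+formal ingredient is Corollary~\ref{cotorsion-resolution-cor}, i.e.\ the cotorsion periodicity theorem (Theorem~\ref{cotorsion-periodicity}) --- without it, possession of a left cotorsion resolution would not obviously force $D$ itself to be cotorsion, since the class of cotorsion modules, though closed under extensions and cokernels of monomorphisms, is not a priori closed under the relevant infinite left\+resolution limit. The remaining points to watch are purely bookkeeping: keeping track of which tensor factor carries the left $A$\+action, and verifying that the left $R$\+module structure on $A^{\ot_R(n-1)}\ot_RD$ used as input to Proposition~\ref{induction-preserves-cotorsion} is exactly the restriction of scalars of the $A$\+module structure produced at the previous stage.
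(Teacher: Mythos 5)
Your proposal is correct and follows essentially the same route as the paper's own proof: the ``ping\+pong'' between Proposition~\ref{induction-preserves-cotorsion} and Lemma~\ref{cotorsion-restriction-of-scalars} to show all bar terms $A^{\ot_R\,n}\ot_RD$ are cotorsion $A$\+modules, followed by Lemma~\ref{bar-resolution-lemma} and Corollary~\ref{cotorsion-resolution-cor} (i.e.\ cotorsion periodicity), is exactly the argument given there. No gaps.
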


\begin{proof}
 The easy ``only if'' assertion holds by
Lemma~\ref{cotorsion-restriction-of-scalars}.
 To prove the ``if'', consider the left $A$\+module $A\ot_RD$
(with the $A$\+module structure coming from the left $A$\+module
structure on~$A$).
 Since $D$ is a cotorsion left $R$\+module by assumption,
Proposition~\ref{induction-preserves-cotorsion} tells us that
$A\ot_RD$ is a cotorsion left $A$\+module.

 By Lemma~\ref{cotorsion-restriction-of-scalars}, it follows that
left $R$\+module $A\ot_RD$ (with the $R$\+module structure coming
from the left $R$\+module structure on~$A$) is cotorsion.
 Therefore, Proposition~\ref{induction-preserves-cotorsion} tells us
that the left $A$\+module $A\ot_RA\ot_RD$ (with the $A$\+module
structure coming from the left $A$\+module structure on the leftmost
tensor factor~$A$) is cotorsion.

 Proceeding in this way, we prove by induction that the left
$A$\+module $A^{\ot_R\,n}\ot_RD$ (with the $A$\+module
structure coming from the left $A$\+module structure on the leftmost
tensor factor~$A$) is cotorsion for all $n\ge1$.
 Now Lemma~\ref{bar-resolution-lemma} provides a (left) resolution
of $D$ by cotorsion $A$\+modules.
 By Corollary~\ref{cotorsion-resolution-cor}, we can conclude that $D$
is a cotorsion $A$\+module, as desired.
\end{proof}

\Section{Flaprojective Modules}

 Let $R$ be an associative ring.
 It is easy to prove that the pair of classes of flat left $R$\+modules
and cotorsion left $R$\+modules $(\sF,\sC)$~$=$ $(R\Modl_\flat$,
$R\Modl^\cot)$ is a cotorsion pair in $R\Modl$
\,\cite[Lemma~3.4.1]{Xu}, \cite[Lemma~1.3]{Pdomc}.
 Proving that this pair of classes, called the \emph{flat cotorsion
pair} or the \emph{Enochs cotorsion pair}~\cite{En2},
\cite[Definition~5.18(i)]{GT}), is a \emph{complete} cotorsion pair,
is harder.
 There are two proofs of this result in the paper~\cite{BBE}, one of
them based on the following observation of Enochs.

\begin{prop} \label{flat-modules-deconstructible}
 Let $R$ be an associative ring.
 Then the class of all flat left $R$\+modules\/ $\sF=R\Modl_\flat$ is
deconstructible in $R\Modl$.
 More precisely, let $\lambda$~be an infinite cardinal not smaller
than the cardinality\/ $|R|$ of the ring $R$, and let\/ $\sS$ be the set
of (representatives of isomorphism classes of) flat left $R$\+modules
$S$ of the cardinality\/ $|S|$ not exceeding~$\lambda$.
 Then one has\/ $\sF=\Fil(\sS)$.
\end{prop}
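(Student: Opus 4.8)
The plan is to establish the two inclusions $\Fil(\sS)\subseteq\sF$ and $\sF\subseteq\Fil(\sS)$ separately. The first is routine: I would show by transfinite induction along a filtration that every $R$\+module filtered by flat modules is flat. At a successor step one uses that an extension $0\rarrow F'\rarrow F\rarrow F''\rarrow 0$ with $F'$ and $F''$ flat has $F$ flat, since in the long exact sequence of $\Tor^R_*(N,-)$ for a right $R$\+module $N$ the term $\Tor^R_1(N,F)$ is squeezed between the vanishing terms $\Tor^R_1(N,F')$ and $\Tor^R_1(N,F'')$; at a limit step one uses that a union of a chain of flat submodules is flat (flatness is preserved by direct limits). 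Since every module in $\sS$ is flat by definition, this yields $\Fil(\sS)\subseteq R\Modl_\flat=\sF$.

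The content of the proposition is the reverse inclusion. I would base it on the interaction of flatness with pure submodules: if $F$ is a flat $R$\+module and $F'\subseteq F$ is a pure submodule, then both $F'$ and $F/F'$ are flat. This is a short $\Tor$ chase: for any right $R$\+module $N$ the sequence $0\rarrow N\ot_RF'\rarrow N\ot_RF\rarrow N\ot_R(F/F')\rarrow 0$ is exact by purity, so from $\Tor^R_1(N,F)=0$ one first obtains $\Tor^R_1(N,F/F')=0$, and then, using the vanishing just proved, $\Tor^R_1(N,F')=0$. The second ingredient I need is a L\"owenheim--Skolem lemma for purity: for an arbitrary $R$\+module $M$ and any subset $X\subseteq M$ with $|X|\le\lambda$, there exists a pure submodule $N\subseteq M$ with $X\subseteq N$ and $|N|\le\lambda$. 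Here I would use the finitary description of purity (a submodule $N\subseteq M$ is pure if and only if every finite system of equations $\sum_{j=1}^n r_{ij}y_j=a_i$ for $1\le i\le m$, with $r_{ij}\in R$ and $a_i\in N$, that has a solution $(y_j)\in M^n$ already has a solution in $N^n$), and build an increasing chain of submodules $X\subseteq N_0\subseteq N_1\subseteq\dotsb$ of $M$, where $N_{k+1}$ is generated by $N_k$ together with one solution in $M$ chosen for each such system over $N_k$ that is solvable in $M$; since $|R|\le\lambda$ and $|N_k|\le\lambda$, there are at most $\lambda$ such systems, so $|N_{k+1}|\le\lambda$. Then $N=\bigcup_k N_k$ has $|N|\le\lambda$ and is pure in $M$, since the finitely many parameters of any given system lie in some~$N_k$.

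With these tools in hand I would assemble the filtration. Given a flat $R$\+module $F$, enumerate its elements as $F=\{x_\xi\mid\xi<\mu\}$ and construct a continuous chain of pure flat submodules $(F_i)_{0\le i\le\mu}$ with $x_i\in F_{i+1}$ and $|F_{i+1}/F_i|\le\lambda$, as follows. Set $F_0=0$; at a limit ordinal $j$ put $F_j=\bigcup_{i<j}F_i$ (a union of a chain of pure submodules is pure, and a union of a chain of flat submodules is flat); at a successor step, the module $F/F_i$ is flat because $F_i$ is pure in $F$, so applying the L\"owenheim--Skolem lemma to the image of $x_i$ in $F/F_i$ produces a pure submodule $\overline G\subseteq F/F_i$ of cardinality $\le\lambda$ containing that image, and one takes $F_{i+1}$ to be its preimage in $F$. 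Then $F_i\subseteq F_{i+1}$ and $x_i\in F_{i+1}$, the quotient $F_{i+1}/F_i\cong\overline G$ has cardinality $\le\lambda$, and $F_{i+1}$ is pure in $F$ by transitivity of purity (from $F_i$ pure in $F$ and $F_{i+1}/F_i$ pure in $F/F_i$). Consequently $F_\mu=F$, and each successive quotient $F_{i+1}/F_i$ is the quotient of the flat module $F_{i+1}$ by the submodule $F_i$, which is pure in $F_{i+1}$ because it is pure in the larger module $F$; hence $F_{i+1}/F_i$ is flat of cardinality $\le\lambda$, i.e.\ it belongs to $\sS$. This exhibits $F$ as an object of $\Fil(\sS)$.

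The main obstacle is the L\"owenheim--Skolem construction of the small pure submodule: one must set up the finitary characterization of purity carefully and keep the cardinality bookkeeping under control, so that at each of the countably many closure stages both the chosen set of solutions and the submodule it generates stay of size $\le\lambda$. The remaining ingredients --- the $\Tor$ chases showing that flatness passes to pure submodules and their quotients, the transitivity of purity, and the stability of purity and of flatness under unions of chains --- are standard homological bookkeeping and present no real difficulty.
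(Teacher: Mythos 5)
Your proof is correct and is essentially the argument of the source the paper cites for this proposition (Bican--El~Bashir--Enochs, Lemma~1 and Proposition~2; cf.\ also \cite[Lemma~6.17]{GT}): the purification/L\"owenheim--Skolem construction of small pure submodules, the fact that pure submodules and pure quotients of flat modules are flat, and the resulting continuous chain with flat quotients of cardinality at most~$\lambda$. The paper itself gives no independent proof, so there is nothing further to compare.
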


\begin{proof}
 This is~\cite[Lemma~1 and Proposition~2]{BBE}.
 See~\cite[Lemma~6.17]{GT} for a generalization.
\end{proof}

\begin{cor}
 For any associative ring $R$, the flat cotorsion pair
$(\sF,\sC)$~$=$ $(R\Modl_\flat$, $R\Modl^\cot)$ in $R\Modl$
is complete.
\end{cor}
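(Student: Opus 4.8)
The plan is to obtain this corollary as an immediate consequence of Proposition~\ref{flat-modules-deconstructible} and Corollary~\ref{eklof-trlifaj-cor}(b). The strategy is to check that the class $\sF=R\Modl_\flat$ satisfies the three hypotheses of Corollary~\ref{eklof-trlifaj-cor}(b), so that $(\sF,\sF^{\perp_1})$ is automatically a complete cotorsion pair, and then to observe that $\sF^{\perp_1}$ is precisely the class $\sC=R\Modl^\cot$.

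First I would verify the hypotheses. Deconstructibility of $\sF$ in $R\Modl$ is exactly the assertion of Proposition~\ref{flat-modules-deconstructible}. Closure of $\sF$ under direct summands in $R\Modl$ holds because flatness of a left $R$\+module $N$ is equivalent to the vanishing of $\Tor_1^R(M,N)$ for all right $R$\+modules $M$, and this property is clearly inherited by any direct summand of~$N$. Finally, the free left $R$\+module ${}_RR$ is flat, so ${}_RR\in\sF$. With these three facts in hand, Corollary~\ref{eklof-trlifaj-cor}(b) produces a complete cotorsion pair $(\sF,\sF^{\perp_1})$ in $R\Modl$.

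It then remains to identify the right-hand class. Unwinding the definitions, $\sF^{\perp_1}$ consists of those left $R$\+modules $X$ for which $\Ext^1_R(F,X)=0$ for every flat left $R$\+module $F$, which is precisely the definition of a cotorsion left $R$\+module recalled at the beginning of Section~\ref{cotorsion-modules-secn}. Hence $\sF^{\perp_1}=R\Modl^\cot=\sC$; in particular this also re-proves, via Corollary~\ref{eklof-trlifaj-cor}(b), the fact noted above that $(\sF,\sC)$ is a cotorsion pair, and it is now seen to be complete, as asserted. I do not expect any genuine obstacle here: all the substance --- the transfinite $\lambda$\+indexed filtration of flat modules underlying deconstructibility --- is already packaged into Proposition~\ref{flat-modules-deconstructible}, and the remainder is a routine application of the Eklof--Trlifaj formalism from Section~1; the only point calling for a moment's care is the trivial identification $\sF^{\perp_1}=\sC$.
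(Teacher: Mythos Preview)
Your proposal is correct and follows essentially the same approach as the paper: the paper's proof simply states that the assertion follows from Proposition~\ref{flat-modules-deconstructible} by virtue of Corollary~\ref{eklof-trlifaj-cor}(b), and you have merely spelled out the routine verification of the hypotheses (closure under direct summands, ${}_RR\in\sF$) and the identification $\sF^{\perp_1}=R\Modl^\cot$.
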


\begin{proof}
 This is~\cite[Section~2]{BBE} and a particular case
of~\cite[Theorem~6.19]{GT}.
 The assertion follows from
Proposition~\ref{flat-modules-deconstructible} by virtue
of Corollary~\ref{eklof-trlifaj-cor}(b).
\end{proof}

 Now let $R\rarrow A$ be a homomorphism of associative rings.
 A left $A$\+module $F$ is said to be \emph{flaprojective relative
to~$R$} (or \emph{$A/R$\+flaprojective}) \cite[Section~1.6]{Pdomc}
if $\Ext^1_A(F,C)=0$ for all left $A$\+modules $C$ that are
\emph{cotorsion as left $R$\+modules} (i.~e., the underlying left
$R$\+module of $C$ is cotorsion).

 Since the class of cotorsion $R$\+modules is closed under cokernels
of monomorphisms (see the beginning of
Section~\ref{cotorsion-modules-secn}), so is the class of all
$A$\+modules that are cotorsion over~$R$.
 All cotorsion $A$\+modules are cotorsion over $R$ by
Lemma~\ref{cotorsion-restriction-of-scalars}; in particular, all
injective $A$\+modules are cotorsion over~$R$.
 Using these observations, one can show that $\Ext^n_A(F,C)=0$ for
all $A/R$\+fla\-pro\-jec\-tive left $R$\+modules $F$, all left
$A$\+modules $C$ that are cotorsion over $R$, and all integers $n\ge1$
(cf.~\cite[Lemma~5.24]{GT} and~\cite[Lemma~7.1]{PS6}).

 It follows that the class of $A/R$\+flaprojective $A$\+modules is
closed under kernels of epimorphisms in $A\Modl$; it is also obviously
closed under extensions and infinite direct sums.
 By Lemma~\ref{eklof-lemma}, the class of $A/R$\+flaprojective
left $A$\+modules is closed under transfinitely iterated extensions.

\begin{rems} \label{flaprojective-remarks}
 (1)~Since all cotorsion $A$\+modules are cotorsion over $R$, it
follows that all $A/R$\+flaprojective $A$\+modules are
flat~\cite[Remark~1.29]{Pdomc}.
 Obviously, all projective $A$\+modules are $A/R$\+flaprojective;
so the class of $A/R$\+flaprojective $A$\+modules is, generally
speaking, intermediate between the classes of projective and flat
$A$\+modules.

 (2)~If $A=R$ and $R\rarrow A$ is the identity ring homomorphism,
then the class of $A/R$\+flaprojective $A$\+modules coincides with
the class of flat $R$\+modules~\cite[Example~1.28(1)]{Pdomc}.

 (3)~If $R$ is a left perfect ring (in the sense
of~\cite[Theorem~P]{Bas}), then all flat left $R$\+modules are
projective, and therefore all left $R$\+modules are cotorsion.
 Consequently, the class of $A/R$\+flaprojective left $A$\+modules
coincides with the class of projective left $A$\+modules in this case.

 (4)~The converse assertion to~(3) is \emph{not} true in general.
 For example, if $R=\boZ$ is the ring of integers and $A=\boQ$ is
the field of rational numbers, then all $A$\+modules are projective,
hence they are also $A/R$\+flaprojective by~(1).
 But the ring $R$ is not perfect.
 
 We are not aware of any characterization of the ring homomorphisms
$R\rarrow A$ for which the class of $A/R$\+flaprojective $A$\+modules
coincides with the class of projective $A$\+modules.
\end{rems}

 We will denote the class of all $R$\+cotorsion left $A$\+modules
(i.~e., left $A$\+modules that are cotorsion over~$R$) by
$A\Modl^{R\dcot}\subset A\Modl$ and the class of all
$A/R$\+flaprojective left $A$\+modules by
$A\Modl_{A/R\dflpr}\subset A\Modl$.

 Given a left $R$\+module $G$, the left $A$\+module $A\ot_RG$ is
said to be \emph{induced} from~$G$.
 Notice that the $A$\+module $A\ot_RG$ is flat whenever the $R$\+module
$G$ is flat.
 The following lemma describes the Ext spaces from modules induced
from flat modules.

\begin{lem} \label{ext-from-induced-from-flat}
 Let $R\rarrow A$ be a homomorphism of associative rings, $G$ be
a flat left $R$\+module, and $M$ be a left $A$\+module.
 Then there are natural isomorphisms of abelian groups\/
$\Ext^n_A(A\ot_RG,\>M)\simeq\Ext^n_R(G,M)$ for all integers $n\ge1$.
\end{lem}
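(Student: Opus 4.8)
The plan is to prove this by the standard change-of-rings (tensor-hom) adjunction, derived through a projective resolution of $G$ over $R$, with the flatness hypothesis on $G$ entering at exactly one point: to guarantee that the induction functor $A\ot_R({-})$ carries such a resolution to a resolution of $A\ot_RG$ over~$A$. Note that no projectivity assumption on $A$ over $R$ is available or needed here; the ring homomorphism $R\rarrow A$ is arbitrary, and flatness of $G$ does the work.

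First I would choose a projective resolution $P_\bullet\rarrow G$ of $G$ in $R\Modl$ and apply the induction functor to obtain a complex $A\ot_RP_\bullet\rarrow A\ot_RG$ of left $A$\+modules. Each induced module $A\ot_RP_i$ is projective in $A\Modl$, since induction is left adjoint to the exact restriction-of-scalars functor and therefore preserves projectives (equivalently, induction takes free $R$\+modules to free $A$\+modules, and projectives are their direct summands). So $A\ot_RP_\bullet$ is a complex of projective left $A$\+modules augmented over $A\ot_RG$.

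The crucial step, and the only place the flatness of $G$ is used, is to verify that $A\ot_RP_\bullet$ is in fact a resolution of $A\ot_RG$, that is, acyclic in positive homological degrees. The homology of $A\ot_RP_\bullet$ computes $\Tor^R_n(A,G)$, with $A$ regarded as a right $R$\+module. Because $G$ is a flat left $R$\+module, the functor $({-})\ot_RG$ is exact, so $\Tor^R_n(A,G)=0$ for all $n\ge1$. Hence $A\ot_RP_\bullet\rarrow A\ot_RG$ is a projective resolution of $A\ot_RG$ in $A\Modl$. I expect this to be the main obstacle conceptually: without flatness the induced complex need not be a resolution, and the identification of the Ext groups would break down entirely.

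It then remains to compute. By definition, $\Ext^n_A(A\ot_RG,\>M)$ is the $n$th cohomology of the complex $\Hom_A(A\ot_RP_\bullet,\>M)$. Applying the adjunction isomorphism $\Hom_A(A\ot_RP_i,\>M)\simeq\Hom_R(P_i,M)$ degreewise, where on the right $M$ denotes the restriction of $M$ to a left $R$\+module, identifies this complex with $\Hom_R(P_\bullet,M)$; this is an isomorphism of complexes, natural in $M$ since the resolution $P_\bullet$ is fixed. As $P_\bullet\rarrow G$ is a projective resolution over $R$, the cohomology of $\Hom_R(P_\bullet,M)$ is $\Ext^n_R(G,M)$, yielding the asserted natural isomorphisms for all $n\ge1$. (The case $n=0$ is simply the underived adjunction, but the statement only claims the isomorphism in positive degrees.)
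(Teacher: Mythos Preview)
Your argument is correct and is precisely the standard change-of-rings argument that the paper defers to by citing \cite[Proposition~VI.4.1.3]{CE} (and the parallel references in \cite{Pcosh,Pdomc}); the paper gives no independent proof beyond these citations. Your write-up spells out exactly what those references contain, including the correct identification of flatness of $G$ as the sole hypothesis needed to make $A\ot_RP_\bullet$ a resolution.
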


\begin{proof}
 This is a particular case of~\cite[Proposition~VI.4.1.3]{CE}.
 See also~\cite[formula~(1.1) in Section~1.2]{Pcosh}
or~\cite[Lemma~1.1]{Pdomc}.
\end{proof}

\begin{thm} \label{flaprojective-cotorsion-pair}
 Let $R\rarrow A$ be a homomorphism of associative rings.
 Then \par
\textup{(a)} the pair of classes of left $A$\+modules
$(\sF,\sC)$~$=$ $(A\Modl_{A/R\dflpr}$, $A\Modl^{R\dcot})$
is a complete cotorsion pair in $A\Modl$; \par
\textup{(b)} a left $A$\+module is $A/R$\+flaprojective if and only if
it is a direct summand of an $A$\+module filtered by the left
$A$\+modules $A\ot_RG$ induced from flat left $R$\+modules~$G$.
\end{thm}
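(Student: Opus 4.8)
The plan is to realize the pair $(\sF,\sC)=(A\Modl_{A/R\dflpr},\>A\Modl^{R\dcot})$ as the cotorsion pair \emph{generated by a set} of left $A$\+modules, so that part~(a) becomes a direct instance of the Eklof--Trlifaj theorem and part~(b) is read off from it. First I would invoke Proposition~\ref{flat-modules-deconstructible}: fix an infinite cardinal $\lambda\ge|R|$ and let $\sS$ be the set of representatives of the isomorphism classes of flat left $R$\+modules of cardinality at most~$\lambda$, so that $R\Modl_\flat=\Fil(\sS)$; since $|R|\le\lambda$, the free module ${}_RR$ lies in~$\sS$. Let $\sS_A$ be the set consisting of the left $A$\+modules $A\ot_RS$ for $S\in\sS$, together with the free module ${}_AA$ (which is anyway isomorphic to $A\ot_R{}_RR$). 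Let $(\sF,\sC)$ be the cotorsion pair generated by $\sS_A$ in $A\Modl$. By Theorem~\ref{eklof-trlifaj-theorem}(a) this cotorsion pair is complete, and since ${}_AA\in\sS_A$, Theorem~\ref{eklof-trlifaj-theorem}(b) gives $\sF=\Fil(\sS_A)^\oplus$.

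Next I would identify the two classes. By construction $\sC=\sS_A^{\perp_1}$, and as ${}_AA$ is projective, a left $A$\+module $M$ lies in $\sC$ if and only if $\Ext^1_A(A\ot_RS,\>M)=0$ for every $S\in\sS$. Each $S$ is a flat left $R$\+module, so Lemma~\ref{ext-from-induced-from-flat} yields $\Ext^1_A(A\ot_RS,\>M)\simeq\Ext^1_R(S,M)$, where on the right $M$ denotes the underlying left $R$\+module. Hence $M\in\sC$ if and only if the $R$\+module $M$ lies in $\sS^{\perp_1}$; but by the Eklof lemma $\sS^{\perp_1}=\Fil(\sS)^{\perp_1}=(R\Modl_\flat)^{\perp_1}=R\Modl^\cot$, so $M\in\sC$ if and only if $M$ is cotorsion as an $R$\+module, i.e.\ $\sC=A\Modl^{R\dcot}$. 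Passing to left Ext-orthogonals, $\sF={}^{\perp_1}\sC={}^{\perp_1}(A\Modl^{R\dcot})$, which is by definition $A\Modl_{A/R\dflpr}$. This establishes part~(a).

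For part~(b), part~(a) together with Theorem~\ref{eklof-trlifaj-theorem}(b) gives $A\Modl_{A/R\dflpr}=\sF=\Fil(\sS_A)^\oplus$. Since every module in $\sS_A$ is induced from a flat left $R$\+module, $\Fil(\sS_A)^\oplus$ is contained in the class of direct summands of $A$\+modules filtered by induced modules $A\ot_RG$ with $G$ flat. Conversely, each such $A\ot_RG$ is $A/R$\+flaprojective, because $\Ext^1_A(A\ot_RG,\>C)\simeq\Ext^1_R(G,C)=0$ for every left $A$\+module $C$ cotorsion over~$R$ (by Lemma~\ref{ext-from-induced-from-flat} and flatness of~$G$); and the class $A\Modl_{A/R\dflpr}$ is closed under transfinitely iterated extensions by the Eklof lemma and under direct summands, being the left-hand part of the cotorsion pair from part~(a). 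Hence the reverse inclusion holds as well, and part~(b) follows.

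The proof has no real obstacle: essentially all the content is in the cited results, and the argument is a matter of assembling them. The one step that is genuinely essential is the use of the Eklof lemma applied to Enochs' deconstructibility of flat modules (Proposition~\ref{flat-modules-deconstructible}) to pass from the vanishing of $\Ext^1_R({-},M)$ on the \emph{set} $\sS$ of small flat $R$\+modules to the conclusion that $M$ is cotorsion over~$R$. This is precisely what allows the right-hand class of the generated cotorsion pair to be recognized as $A\Modl^{R\dcot}$, and therefore its left-hand class as $A\Modl_{A/R\dflpr}$; without it one would only know that $\sC$ contains $A\Modl^{R\dcot}$.
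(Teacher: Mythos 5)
Your proposal is correct and follows essentially the same route as the paper: deconstructibility of flat $R$\+modules (Proposition~\ref{flat-modules-deconstructible}), the $\Ext$\+adjunction for induced modules (Lemma~\ref{ext-from-induced-from-flat}), the Eklof lemma to identify the right-hand class as $A\Modl^{R\dcot}$, and the Eklof--Trlifaj theorem for completeness and the filtration description. The only cosmetic difference is that the paper also cites an external reference for the result, while you assemble the argument in full; your extra care in checking both inclusions in part~(b) matches the paper's remark that the ``if'' direction follows from Lemmas~\ref{ext-from-induced-from-flat} and~\ref{eklof-lemma}.
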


\begin{proof}
 This is~\cite[Proposition~3.3]{Pctrl}.
 Notice that the left $A$\+module $A^+=\Hom_\boZ(A,\boQ/\boZ)$
is injective, hence pure-injective, hence cotorsion; so by
Lemma~\ref{cotorsion-restriction-of-scalars}, \,$A^+$ is
a cotorsion left $R$\+module and~\cite[Proposition~3.3]{Pctrl}
is applicable.  {\hbadness=1100\par}

 To spell out some details, one starts with observing that the ``if''
assertion in part~(b) holds by
Lemmas~\ref{ext-from-induced-from-flat} and~\ref{eklof-lemma}.
 Let $\sS_0$ be a set of flat left $R$\+modules such that
$R\Modl_\flat=\Fil(\sS_0)$, as in
Proposition~\ref{flat-modules-deconstructible}.
 Then $R\Modl^\cot=(\sS_0)^{\perp_1}\subset R\Modl$ by
Lemma~\ref{eklof-lemma}.
 Let $\sS\subset A\Modl$ be the set of all left $A$\+modules
$A\ot_RS$, where $S\in\sS_0$.
 Then it follows that $A\Modl^{R\dcot}=\sS^{\perp_1}\subset A\Modl$ by
Lemma~\ref{ext-from-induced-from-flat}.
 Without loss of generality we can assume that ${}_RR\in\sS_0$;
then ${}_AA\in\sS$.
 Now part~(a) is provided by Theorem~\ref{eklof-trlifaj-theorem}(a)
and the ``only if'' implication in part~(b) by
Theorem~\ref{eklof-trlifaj-theorem}(b).
\end{proof}

 We call the complete cotorsion pair
$(\sF,\sC)$~$=$ $(A\Modl_{A/R\dflpr}$, $A\Modl^{R\dcot})$
from Theorem~\ref{flaprojective-cotorsion-pair}
the \emph{flaprojective cotorsion pair} or
the \emph{$A/R$\+flaprojective cotorsion pair} in the category
of left $A$\+modules $A\Modl$.

\begin{cor} \label{relatively-left-perfect-conditions}
 Let $R\rarrow A$ be a homomorphism of associative rings.
 Then the following conditions are equivalent:
\begin{enumerate}
\item a left $A$\+module is cotorsion whenever it is cotorsion
as a module over~$R$;
\item a left $A$\+module is cotorsion if and only if it is cotorsion
as a module over~$R$;
\item all flat left $A$\+modules are $A/R$\+flaprojective;
\item the classes of flat left $A$\+modules and $A/R$\+flaprojective
left $A$\+modules coincide.
\end{enumerate}
\end{cor}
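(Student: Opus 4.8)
The plan is to establish the cycle of implications (2)$\Rightarrow$(1)$\Rightarrow$(3)$\Rightarrow$(4)$\Rightarrow$(2), using the flaprojective cotorsion pair of Theorem~\ref{flaprojective-cotorsion-pair} as the main bookkeeping tool. The implication (2)$\Rightarrow$(1) is trivial, since condition~(1) is one half of the biconditional in~(2) (and the other half, that every $A$\+cotorsion module is $R$\+cotorsion, always holds by Lemma~\ref{cotorsion-restriction-of-scalars}). So really the content is that (1)$\Leftrightarrow$(3) and that these are equivalent to the coincidence statements~(2) and~(4).

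For (1)$\Rightarrow$(3): condition~(1) says precisely that $A\Modl^{R\dcot}\subset A\Modl^\cot$. Since $A/R$\+flaprojective modules are by definition left $\Ext^1_A$\+orthogonal to $A\Modl^{R\dcot}$, while flat $A$\+modules are left $\Ext^1_A$\+orthogonal to $A\Modl^\cot$, the inclusion of right-hand classes $A\Modl^{R\dcot}\subset A\Modl^\cot$ forces the reverse inclusion of left-hand classes, i.e. $A\Modl_\flat\subset A\Modl_{A/R\dflpr}$, which is~(3). Conversely, for (3)$\Rightarrow$(1): if every flat $A$\+module is $A/R$\+flaprojective, then taking right $\Ext^1_A$\+orthogonals and using that both $(\,A\Modl_\flat,\,A\Modl^\cot)$ and $(\,A\Modl_{A/R\dflpr},\,A\Modl^{R\dcot})$ are cotorsion pairs (the first by the flat cotorsion pair in $A\Modl$, the second by Theorem~\ref{flaprojective-cotorsion-pair}(a)), we get $A\Modl^{R\dcot}=(\,A\Modl_{A/R\dflpr})^{\perp_1}\supset(\,A\Modl_\flat)^{\perp_1}=A\Modl^\cot$; combined with the always-valid reverse inclusion from Lemma~\ref{cotorsion-restriction-of-scalars}, this gives the coincidence $A\Modl^\cot=A\Modl^{R\dcot}$, which is~(2), hence in particular~(1).

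For the remaining equivalences, note that Remark~\ref{flaprojective-remarks}(1) already gives the inclusion $A\Modl_{A/R\dflpr}\subset A\Modl_\flat$ unconditionally. Therefore (3), which asserts the reverse inclusion, is equivalent to the equality~(4); and symmetrically, Lemma~\ref{cotorsion-restriction-of-scalars} gives $A\Modl^\cot\subset A\Modl^{R\dcot}$ unconditionally, so~(1) is equivalent to the equality~(2). Putting these observations together with the two implications of the previous paragraph closes the loop: (1)$\Leftrightarrow$(2) and (3)$\Leftrightarrow$(4) are formal, while (1)$\Leftrightarrow$(3) is the substantive step, which itself reduces to the elementary fact that a containment between the right-hand classes of two cotorsion pairs is equivalent to the reverse containment between their left-hand classes.

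The only point requiring genuine care — the ``main obstacle,'' though it is a mild one — is the passage from an inclusion of one side of a cotorsion pair to the reverse inclusion of the other side. This is valid precisely because in a cotorsion pair each class is recovered as the $\Ext^1$\+orthogonal of the other, and the orthogonality operation is inclusion-reversing; one must simply make sure to invoke that both pairs in play genuinely are cotorsion pairs, which is exactly what Theorem~\ref{flaprojective-cotorsion-pair}(a) and the flat cotorsion pair in $A\Modl$ supply. No appeal to completeness, to deconstructibility, or to the cotorsion periodicity theorem is needed for this corollary — it is purely formal once Theorem~\ref{A-cotorsion=R-cotorsion-theorem} is not invoked.
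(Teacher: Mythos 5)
Your argument is correct and follows essentially the same route as the paper: both proofs reduce everything to the fact that the flat and the $A/R$\+flaprojective cotorsion pairs each determine one class from the other by $\Ext^1_A$\+orthogonality, combined with the unconditional inclusions supplied by Lemma~\ref{cotorsion-restriction-of-scalars} and Remark~\ref{flaprojective-remarks}(1). One small slip: in your (3)$\Rightarrow$(1) step the displayed chain should read $A\Modl^{R\dcot}=(A\Modl_{A/R\dflpr})^{\perp_1}\subset(A\Modl_\flat)^{\perp_1}=A\Modl^\cot$, since orthogonality reverses inclusions --- this is exactly condition~(1); the $\supset$ as written merely reproduces the inclusion that always holds.
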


\begin{proof}
 (1)~$\Longleftrightarrow$~(2) holds because all cotorsion $A$\+modules
are always cotorsion over $R$ by
Lemma~\ref{cotorsion-restriction-of-scalars}.

 (3)~$\Longleftrightarrow$~(4) holds because all $A/R$\+flaprojective
$A$\+modules are always flat by
Remark~\ref{flaprojective-remarks}(1).

 (2)~$\Longleftrightarrow$~(4) The pair of classes of left $A$\+modules
($A\Modl_\flat$, $A\Modl^\cot$) is always a (complete) cotorsion pair
in $A\Modl$ by the discussion in the beginning of this section.
 The pair of classes of left $A$\+modules
($A\Modl_{A/R\dflpr}$, $A\Modl^{R\dcot}$) is always a (complete)
cotorsion pair in $A\Modl$ by
Theorem~\ref{flaprojective-cotorsion-pair}(a).

 In a cotorsion pair, any one of the two classes determines the other
class as the respective right/left $\Ext^1_A$\+orthogonal class
in $A\Modl$.
 Therefore, one has $A\Modl_\flat=A\Modl_{A/R\dflpr}$ if and only if
$A\Modl^\cot=A\Modl^{R\dcot}$.
\end{proof}

 One can say that a ring homomorphism $R\rarrow A$ is \emph{relatively
left perfect} if any one of the equivalent conditions of
Corollary~\ref{relatively-left-perfect-conditions} is satisfied.
 Theorem~\ref{A-cotorsion=R-cotorsion-theorem} above and following
corollary claim that the ring homomorphism $R\rarrow A$
is relatively left perfect whenever $A$ is a finitely generated
projective right $R$\+module.

 The following corollary is the second main result of this paper.

\begin{cor} \label{A-flat=A/R-flaprojective-cor}
 Let $R\rarrow A$ be a homomorphism of associative rings such that
$A$ is a finitely generated projective right $R$\+module.
 Then the classes of flat left $A$\+modules and $A/R$\+flaprojective
left $A$\+modules coincide.
 In other words, every flat left $A$\+module is a direct summand of
an $A$\+module filtered by left $A$\+modules $A\ot_RG$ induced from
flat left $R$\+modules~$G$.
\end{cor}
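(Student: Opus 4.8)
The plan is to assemble this corollary from results already established, with essentially all of the substantive work done in Theorem~\ref{A-cotorsion=R-cotorsion-theorem}. First I would invoke that theorem: since $A$ is by hypothesis a finitely generated projective right $R$\+module, a left $A$\+module is cotorsion as an $A$\+module if and only if it is cotorsion as an $R$\+module. This is precisely condition~(2) in the list of equivalent conditions of Corollary~\ref{relatively-left-perfect-conditions}; in other words, the ring homomorphism $R\rarrow A$ is relatively left perfect.

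Next I would pass from condition~(2) to condition~(4) of Corollary~\ref{relatively-left-perfect-conditions}, which asserts exactly that the classes $A\Modl_\flat$ and $A\Modl_{A/R\dflpr}$ coincide. The mechanism is the one recorded in the proof of that corollary: both $(A\Modl_\flat,A\Modl^\cot)$ and $(A\Modl_{A/R\dflpr},A\Modl^{R\dcot})$ are cotorsion pairs in $A\Modl$ --- the former by the flat cotorsion pair discussion opening Section~4, the latter by Theorem~\ref{flaprojective-cotorsion-pair}(a) --- so equality of the right-hand classes $A\Modl^\cot=A\Modl^{R\dcot}$ (which is what Theorem~\ref{A-cotorsion=R-cotorsion-theorem} gives us) forces equality of the corresponding left $\Ext^1_A$\+orthogonal classes $A\Modl_\flat=A\Modl_{A/R\dflpr}$. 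For the ``in other words'' reformulation I would simply quote Theorem~\ref{flaprojective-cotorsion-pair}(b): an $A$\+module is $A/R$\+flaprojective precisely when it is a direct summand of an $A$\+module filtered by the induced modules $A\ot_RG$ with $G$ flat over $R$; combining this with the equality just obtained yields the stated description of flat left $A$\+modules.

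As for where the difficulty lies: it is entirely upstream. The real content is Theorem~\ref{A-cotorsion=R-cotorsion-theorem}, whose proof runs the relative bar resolution of Lemma~\ref{bar-resolution-lemma} through Proposition~\ref{induction-preserves-cotorsion} (using the bimodule dualization of Lemma~\ref{bimodule-dualization-lemma} and $\Hom$\+cotorsion Lemma~\ref{Hom-module-cotorsion}) to build a left resolution of $D$ by $A$\+modules that are $R$\+cotorsion and hence $A$\+cotorsion, and then invokes the cotorsion periodicity theorem via Corollary~\ref{cotorsion-resolution-cor}. Granting that, the present corollary is a pure formality about cotorsion pairs with no genuine obstacle; the only point worth double-checking is that the finitely-generated-projective hypothesis fed into Theorem~\ref{A-cotorsion=R-cotorsion-theorem} matches the one available here, which it does.
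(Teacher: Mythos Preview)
Your proposal is correct and follows precisely the paper's own route: invoke Theorem~\ref{A-cotorsion=R-cotorsion-theorem} to verify condition~(2) of Corollary~\ref{relatively-left-perfect-conditions}, pass to condition~(4) to conclude $A\Modl_\flat=A\Modl_{A/R\dflpr}$, and then cite Theorem~\ref{flaprojective-cotorsion-pair}(b) for the filtration description. Your additional paragraph locating the real work upstream is accurate and matches the paper's intent.
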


\begin{proof}
 According to Corollary~\ref{relatively-left-perfect-conditions},
the first assertion is an equivalent restatement of
Theorem~\ref{A-cotorsion=R-cotorsion-theorem}.
 The second assertion follows by
Theorem~\ref{flaprojective-cotorsion-pair}(b).
\end{proof}

 Finally, we can formally prove the theorem stated in the introduction.

\begin{proof}[Proof of Theorem~\ref{main-theorem}]
 Part~(a) is Theorem~\ref{A-cotorsion=R-cotorsion-theorem}.
 Part~(b) is Corollary~\ref{A-flat=A/R-flaprojective-cor}.
\end{proof}

\begin{rem}
 Let $R$ be a left perfect ring in the sense of~\cite[Theorem~P]{Bas}.
 Let $R\rarrow A$ be a ring homomorphism making $A$ a finitely
generated projective right $R$\+module.
 Then it follows from Corollary~\ref{A-flat=A/R-flaprojective-cor}
that all flat left $A$\+modules are projective.
 So $A$ is a left perfect ring as well.

 However, this assertion can be proved in a greater generality
with the classical methods.
 The projectivity condition on the right $R$\+module $A$ can be dropped
here; it suffices to assume that $A$ is a finitely generated right
$R$\+module.
 Indeed, by~\cite[Theorem~P(6)]{Bas}, a ring $S$ is left perfect if
and only if all descending chains of cyclic right $S$\+modules
terminate.
 According to~\cite[Theorem~2]{Bj}, this holds if and only if all
descending chains of finitely generated right $S$\+modules terminate.
 It remains to observe that, whenever $A$ is a finitely generated
right $R$\+module, a right $A$\+module is finitely generated if
and only if it is finitely generated as an $R$\+module.
\end{rem}

\begin{quest}
 Of course, neither Theorem~\ref{A-cotorsion=R-cotorsion-theorem}
nor Corollary~\ref{A-flat=A/R-flaprojective-cor} are true \emph{without}
the condition that the right $R$\+module $A$ is finitely generated.
 If this condition is dropped, it suffices to consider the case when
$R=k$ is a field and $A=k[t]$ is the $k$\+algebra of polynomials in
one variable~$t$.
 Then the $A$\+module $k(t)$ of rational functions in~$t$ and
the $A$\+module $k[t,t^{-1}]$ of Laurent polynomials in~$t$ are
flat but not projective.
 So there exist $A$\+modules that are not cotorsion (in fact,
the free $A$\+module $A$ is not cotorsion).
 However, every $k$\+vector space is a cotorsion $k$\+module, so
all $A$\+modules are cotorsion over~$k$.
 Therefore, all $A/k$\+flaprojective $A$\+modules are projective.

 However, we \emph{do not know} whether the assertions of
Theorem~\ref{A-cotorsion=R-cotorsion-theorem} and/or
Corollary~\ref{A-flat=A/R-flaprojective-cor} hold true with
the projectivity assumption on the right $R$\+module $A$ dropped.
 For example, if $A$ is an associative algebra over a commutative ring
$R$ and the $R$\+module $A$ is finitely presented, does it follow that
the ring homomorphism $R\rarrow A$ is relatively (left) perfect?
\end{quest}

 It is clear from Corollary~\ref{relatively-left-perfect-conditions}(1)
or~(2) that the composition of any two relatively left perfect
homomorphisms of associative rings is a relatively left perfect
homomorphism of associative rings again.
 The following example describes another sufficient condition for
a homomorphism of associative rings to be relatively left perfect.

\begin{ex} \label{left-flat-ring-epimorphisms-example}
 This example is due to S.~Bazzoni and is reproduced here with
her kind permission.

 A homomorphism of associative rings $R\rarrow A$ is said to be
a \emph{ring epimorphism} if it is an epimorphism in the category
of associative rings, or equivalently, if the functor of
restriction of scalars $A\Modl\rarrow R\Modl$ is fully faithful.
 A ring homomorphism $R\rarrow A$ is an epimorphism if and only if
the natural map of abelian groups $N\ot_RM\rarrow N\ot_AM$ is
an isomorphism for every right $A$\+module $N$ and
left $A$\+module~$M$.
 A ring epimorphism $R\rarrow A$ is said to be \emph{left flat}
if $A$ is a flat left $R$\+module.
 We suggest the book~\cite[Sections~XI.1\+-3]{Ste} and
the paper~\cite[Section~4]{GL} as the reference sources on
(left flat and arbitrary) ring epimorphisms.

 Let $R\rarrow A$ be a left flat ring epimorphism and $F$ be
a left $A$\+module.
 Then the following conditions are equivalent:
\begin{enumerate}
\item $F$ is flat as an $R$\+module;
\item $F$ is flat as an $A$\+module;
\item $F$ is an $A/R$\+flaprojective $A$\+module.
\end{enumerate}

 Indeed, every flat left $A$\+module is flat as an $R$\+module
since $A$ is a flat left $R$\+module.
 Conversely, for any left $A$\+module $F$ we have isomorphisms
of left $A$\+modules $F\simeq A\ot_AF\simeq A\ot_RF$, since
$R\rarrow A$ is a ring epimorphism.
 Now if $F$ is flat as an $R$\+module, then it is clear that
$F\simeq A\ot_RF$ is flat as an $A$\+module.

 Furthermore, for any homomorphism of associative rings, every
$A/R$\+flaprojective $A$\+module is flat as an $A$\+module by
Remark~\ref{flaprojective-remarks}(1).
 Conversely, by Corollary~\ref{relatively-left-perfect-conditions},
in order to show that all flat left $A$\+modules are
$A/R$\+flaprojective, it suffices to check that all left $A$\+modules
that are cotorsion over $R$ are also cotorsion over~$A$.

 Let $C$ be an $R$\+cotorsion left $A$\+module.
 For any left $A$\+module $C$ we have isomorphisms of left $A$\+modules
 $C\simeq\Hom_A(A,C)\simeq\Hom_R(A,C)$, since $R\rarrow A$ is
a ring epimorphism.
 Now the left $R$\+module $A$ is flat by assumption, while $C$ is
a cotorsion left $R$\+module.
 Applying Lemma~\ref{Hom-module-cotorsion} for the ring $S=A$ and
the $R$\+$A$\+bimodule $K=A$, we conclude that the left $A$\+module
$C\simeq\Hom_R(A,C)$ is cotorsion.

 Alternatively, one can simply say that for, for any flat $A$\+module
$F$, one has $F\simeq A\ot_RF$ and $F$ is a flat $R$\+module; so
the $A$\+module $F$ is $A/R$\+flaprojective by
Theorem~\ref{flaprojective-cotorsion-pair}(b).

 Thus all left flat epimorphisms of associative rings are
relatively left perfect as associative ring homomorphisms.
\end{ex}

\bigskip


\begin{thebibliography}{99}
\smallskip

\bibitem{Bas}
 H.~Bass.
   Finitistic dimension and a homological generalization of
semi-primary rings.
\textit{Trans.\ of the Amer.\ Math.\ Soc.}\ \textbf{95}, \#3,
p.~466--488, 1960.

\bibitem{BCE}
 S.~Bazzoni, M.~Cort\'es-Izurdiaga, S.~Estrada.
   Periodic modules and acyclic complexes.
\textit{Algebras and Represent.\ Theory} \textbf{23}, \#5,
p.~1861--1883, 2020.  \texttt{arXiv:1704.06672 [math.RA]}

\bibitem{BBE}
 L.~Bican, R.~El Bashir, E.~Enochs.
   All modules have flat covers.
\textit{Bull.\ of the London Math.\ Soc.}\ \textbf{33}, \#4,
p.~385--390, 2001.

\bibitem{Bj}
 J.-E.~Bj\"ork.
   Rings satisfying a minimum condition on principal ideals.
\textit{Journ.\ f\"ur die Reine und Angewandte Math.}\ \textbf{236},
p.~112--119, 1969.

\bibitem{CE}
 H.~Cartan, S.~Eilenberg.
   Homological algebra.
Princeton Landmarks in Mathematics and Physics,
Princeton, 1956--1999.

\bibitem{ET}
 P.~C.~Eklof, J.~Trlifaj.
   How to make Ext vanish.
\textit{Bull.\ of the London Math.\ Soc.}\ \textbf{33}, \#1,
p.~41--51, 2001.

\bibitem{En}
 E.~E.~Enochs.
   Injective and flat covers, envelopes and resolvents.
\textit{Israel Journ.\ of Math.}\ \textbf{39}, \#3, p.~189--209, 1981.

\bibitem{En2}
 E.~Enochs.
   Flat covers and flat cotorsion modules.
\textit{Proc.\ of the Amer.\ Math.\ Soc.}\ \textbf{92}, \#2,
p.~179--184, 1984.

\bibitem{GL}
 W.~Geigle, H.~Lenzing.
   Perpendicular categories with applications to representations
and sheaves.
\textit{Journ.\ of Algebra} \textbf{144}, \#2, p.~273--343, 1991.

\bibitem{GT}
 R.~G\"obel, J.~Trlifaj.
   Approximations and endomorphism algebras of modules.
Second Revised and Extended Edition.
De Gruyter Expositions in Mathematics 41,
De Gruyter, Berlin--Boston, 2012.

\bibitem{Hov}
 M.~Hovey.
   Cotorsion pairs, model category structures, and representation
theory.
\textit{Math.\ Zeitschrift} \textbf{241}, \#2, p.~553--592, 2002.

\bibitem{Pcosh}
 L.~Positselski.
   Contraherent cosheaves on schemes.
Electronic preprint \texttt{arXiv:1209.2995v25 [math.CT]}.

\bibitem{Pcta}
 L.~Positselski.
   Contraadjusted modules, contramodules, and reduced cotorsion modules.
\textit{Moscow Math.\ Journ.}\ \textbf{17}, \#3, p.~385--455, 2017.
\texttt{arXiv:1605.03934 [math.CT]}

\bibitem{Pctrl}
 L.~Positselski.
   An explicit self-dual construction of complete cotorsion pairs
in the relative context.
\textit{Rendiconti Semin.\ Matem.\ Univ.\ Padova} \textbf{149},
p.~191--253, 2023.  \texttt{arXiv:2006.01778 [math.RA]}

\bibitem{Pdomc}
 L.~Positselski.
   $\mathcal D$\+$\Omega$ duality on the contra side.
Electronic preprint \texttt{arXiv:2504.18460v12 [math.AG]}.

\bibitem{PS6}
 L.~Positselski, J.~\v St\!'ov\'\i\v cek.
   Flat quasi-coherent sheaves as directed colimits, and quasi-coherent
cotorsion periodicity.
\textit{Algebras and Represent.\ Theory} \textbf{27}, \#6,
p.~2267--2293, 2024.  \texttt{arXiv:2212.09639 [math.AG]}

\bibitem{Quil}
 D.~G.~Quillen.
   Homotopical algebra.
\textit{Lecture Notes in Math.}\ \textbf{43}, Springer, 1967.

\bibitem{Ros}
 J.~Rosick\'y.
   Flat covers and factorizations.
\textit{Journ.\ of Algebra} \textbf{253}, \#1, p.~1--13, 2002.

\bibitem{Sal}
 L.~Salce.
   Cotorsion theories for abelian groups.
\textit{Symposia Math.}\ \textbf{XXIII},
Academic Press, London--New York, 1979, p.~11--32.

\bibitem{Ste}
 B.~Stenstr\"om.
   Rings of quotients.  An introduction to methods of ring theory.
Springer-Verlag, Berlin--Heidelberg--New York, 1975.

\bibitem{Xu}
 J.~Xu.
   Flat covers of modules.
\textit{Lecture Notes in Math.}\ \textbf{1634}, Springer, 1996.

\end{thebibliography}
\end{document}